\newcommand\numberthis{\addtocounter{equation}{1}\tag{\theequation}}
\newtheorem{theorem}{Theorem}[section]
\newtheorem{corollary}[theorem]{Corollary}
\newtheorem{lemma}[theorem]{Lemma}
\newtheorem{proposition}[theorem]{Proposition}
\newtheorem{claim}[theorem]{Claim}
\theoremstyle{definition}
\newtheorem{definition}[theorem]{Definition}
\newtheorem{remark}[theorem]{Remark}
\begin{document}

\title{Diagonal Form of the Varchenko Matrices
}

\author{Yibo Gao, YiYu Zhang
}

\date{}

\maketitle

\begin{abstract}
Varchenko \cite{varchenko1993bilinear} defined the Varchenko matrix associated to any real hyperplane arrangement and computed its determinant. In this paper, we show that the Varchenko matrix of a hyperplane arrangement has a diagonal form if and only if it is semigeneral, i.e., without degeneracy. In the case of semigeneral arrangement, we present an explicit computation of the diagonal form via combinatorial arguments and matrix operations, thus giving a combinatorial interpretation of the diagonal entries.

\end{abstract}

\section{Introduction}
\label{intro}
Varchenko defined the Varchenko matrix associated with any real hyperplane arrangement in \cite{varchenko1993bilinear} and computed its determinant, which has a very nice factorization. Naturally, one may ask about its Smith normal form or diagonal form over some integer polynomial ring.  The Smith normal forms of the $q$-Varchenko matrices for certain types of hyperplane arrangements were first studied by Denham and Hanlon in \cite{denham1997smith} and more recently by Cai and Mu in \cite{caimu}.

\

In this paper, we prove that the Varchenko matrix of a real hyperplane arrangement has a diagonal form if and only if the arrangement is semigeneral. 
We define hyperplane arrangements and the associated Varchenko matrices in section 2. In section 3, we use combinatorial arguments and matrix operations to explicitly construct a diagonal form of the Varchenko matrix associated with a semigeneral hyperplane arrangement, therefore giving a combinatorial interpretation of the diagonal form.  Finally, we prove by contradiction that the Varchenko matrix of any arrangement with degeneracy does not have a diagonal form in section 4.

\

It follows immediately that while the $q$-Varchenko matrix of any semigeneral arrangement has a Smith normal form, the corresponding Varchenko matrix doesn't in general. Besides, our construction serves as an alternative proof for a special case, i.e., that of real semigeneral hyperplane arrangements, of Varchenko's theorem on the determinant of the Varchenko matrix. 

\section{Preliminaries}
In this paper, we mostly follow the notation in  \cite{stanley2004introduction}. We only consider real, finite, affine hyperplane arrangements $\mathcal{A}=\{H_1,H_2,\ldots,H_n \}$ in $\mathbb{R}^d$. 

\subsection{Hyperplane Arrangement}
First we briefly go over the notation and basic constructions in hyperplane arrangements.

\

For any subset $B\subseteq I=\{1,2,\ldots,N\}$, denote by $H_B=\bigcap_{a\in B}H_a$ the intersection of hyperplanes with index in $B$ . If $B=\emptyset$, then $H_B=\mathbb{R}^d$ by convention.
\begin{definition}
We say that $\mathcal{A}$ is a \textit{general} arrangement (or $\mathcal{A}$ is in \textit{general position}) in $\mathbb{R}^d$ if for any subset $B\subseteq I$ , the cardinality $ |B|\leq d$ implies that $ \dim(H_B)=d-|B|$, while $ |B|>d\text{ implies that } H_B=\emptyset.$

If for all $B\subseteq I$ with $H_B\neq\emptyset$, we have $ \dim{(H_B)}=d-|B|$, then $\mathcal{A}$ is called \textit{semigeneral} (or $\mathcal{A}$ is in \textit{semigeneral position}) in $\mathbb{R}^d$.
\end{definition}

It is clear that all general arrangements are semigeneral arrangements.
  
\

Let $L(\mathcal{A})$ be the partially ordered set whose elements are all nonempty intersections of hyperplanes in $\mathcal{A}$, including $\mathbb{R}^d$ as the intersection over the empty set, with partial order reverse inclusion. We call $L(\mathcal{A})$ the \textit{intersection poset} of the arrangment $\mathcal{A}$. Notice that the minimum element in $L(\mathcal{A})$ is $\mathbb{R}^d$.

Define a \textit{region} $R$ of $\mathcal{A}$ to be a connected component of the complement of $\bigcup_{a\in I}{H_a}$ in $\mathbb{R}^d$. Denote by $\mathcal{R}(\mathcal{A})$ the set of regions of $\mathcal{A}$ and $r(\mathcal{A})=|\mathcal{R}(\mathcal{A})|$ the number of regions. It is well known that if $\mathcal{A}$ is semigeneral, then $r(\mathcal{A})=|L(\mathcal{A})|.$

\subsection{The Varchenko Matrix}

Let $\mathcal{A}=\{H_1,\ldots,H_N\}$ be a real, finite hyperplane arrangement. Assign to each $H_a \in \mathcal{A}$ an indeterminate (or weight) $x_a$. For any pair of regions $(R_i,\ R_j)$ ($i,j$ not necessarily distinct) of $\mathcal{A}$, set $$\mathrm{sep}(R_i,R_j):=\{ H_a\in \mathcal{A}:H_a \mathrm{\ separates\ } R_i \mathrm {\ and\ } R_j\}.$$

To each element $M\in L(\mathcal{A})$, we assign the weight $\displaystyle{x_M=\prod_{M\subseteq H_a}{x_a}}.$ If $M=\mathbb{R}^d$, then $x_M=1$.

\begin{definition}
The \textit{Varchenko matrix} $V(\mathcal{A})=[V_{ij}]$ of a hyperplane arrangement $\mathcal{A}$ is the $r(\mathcal{A})\times r(\mathcal{A})$ matrix with rows and columns indexed by $\mathcal{R}(\mathcal{A})$ and entries $$V_{ij}=\prod_{H_a\in \mathrm{sep}(R_i,R_j)}x_a.$$  If $ \mathrm{sep}(R_i,R_j)=\emptyset$, then $V_{ij}=1$.
\end{definition}

For example, the Varchenko matrix of the arrangement in Fig. \ref{fig:figvar} is  $$V=\begin{bmatrix}1&x_1&x_1x_2&x_1x_3&x_3&x_2x_3&x_1x_2x_3\\x_1&1&x_2&x_3&x_1x_3&x_1x_2x_3&x_2x_3\\x_1x_2&x_2&1&x_2x_3&x_1x_2x_3&x_1x_3&x_3\\x_1x_3&x_3&x_2x_3&1&x_1&x_1x_2&x_2\\x_3&x_1x_3&x_1x_2x_3&x_1&1&x_2&x_1x_2\\x_2x_3&x_1x_2x_3&x_1x_3&x_1x_2&x_2&1&x_1\\x_1x_2x_3&x_2x_3&x_3&x_2&x_1x_2&x_1&1\end{bmatrix}$$. 

It turns out that the determinant of the Varchenko matrix has an elegant factorization. We formulate this result via M\"{o}bius functions as in \cite[sec 6]{stanley2004introduction}.
\begin{definition}
The \textit{M\"{o}bius function} of $L(\mathcal{A})$ is defined by 
$$\mu(M,M') = \left\{ \begin{array}{rcl}
1\ \ \ \ \ \ \ \ \ \ \ \ \ \ \ \ \ \ \ \ \ \ \ \ \  \ \ \  & \mbox{if}
& M=M' \mathrm{\ in\ }L(\mathcal{A})\\
\displaystyle{-\sum_{M\leq N< M'}{\mu(M,N)}} & \mbox{if} & M<M'\mathrm{\ in\ }L(\mathcal{A})
\end{array}\right..$$
Furthermore, we set $\mu(M)=\mu(\mathbb{R}^d,M)$.

Define the \textit{characteristic polynomial} of $\mathcal{A}$ to be $\displaystyle{\chi_{\mathcal{A}}{(t)}=\sum_{M\in L(\mathcal{A})}{\mu(M)t^{\mathrm{dim}(M)}}.}$
\end{definition}

Now we can formulate the result of Varchenko.
  
\begin{theorem}[Varchenko \cite{varchenko1993bilinear}] \label{det}
Let $\mathcal{A}=\{H_1,\ldots, H_N\}$ be a real, finite, affine hyperplane arrangement.  For all elements $M\in L(\mathcal{A})$, define the subarrangement $\mathcal{A}_M:=\{H\in\mathcal{A}:M\subseteq H\}$ and the arrangement $\mathcal{A}^M:=\{M\cap H \neq \emptyset: H\in \mathcal{A}-\mathcal{A}_M\}\subseteq M$. Set $$\  n(M)=r(\mathcal{A}^M) \textrm{\ and\ }p(M)=|\frac{d}{dt}\chi_{\mathcal{A}_M}(1)|.$$
Then the determinant of the Varchenko matrix associated with $\mathcal{A}$ is given by
$$\det V(\mathcal{A})=\prod_{M\in L(\mathcal{A}), M\neq\mathbb{R}^d}{(1-x^2_M)^{n(M)p(M)}}.$$
\end{theorem}

\begin{figure}[ht]
\centering
\includegraphics[scale=0.65]{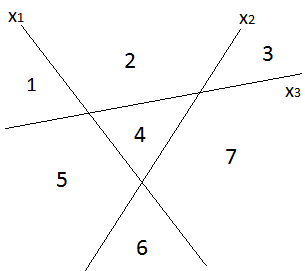}
\caption{General arrangement of three lines in  $\mathbb{R}^2$.}
\label{fig:figvar}
\end{figure}
  
For instance, the Varchenko matrix associated with the arrangement in Figure \ref{fig:figvar} has determinant $$\det(V)=(1-x^2_1)^3 (1-x^2_2)^3(1-x^2_3)^3.$$

For $R_i,R_j,R_k\in\mathcal{R}(\mathcal{A})$, where $i,j,k$ are not necessarily distinct, we define the \textit{distance}  between $R_i$ and $R_j \cup R_k$, denoted by $l_i(j,k)$, to be the product of the indeterminates $x_a$ of all hyperplanes $H_a$ that separate both $R_i, R_j$ and $R_i, R_k$.

It follows that $$l_i(j,k)=\prod_{H_a\in \mathrm{sep}(R_i,R_j)\cap \mathrm{sep}(R_i,R_k)}x_a.$$

Observe that by definition of $V(\mathcal{A})$, 
 the entry $\displaystyle{V_{ij}=\frac{V_{ij}\cdot V_{ik}}{l_i(j,k)^2}.}$

\subsection{Diagonal Form}
\label{section 1.4}

Let  $A,\ B$ be  $N\times N$ square matrices over  $\mathbb{Z}[x_1,x_2,\ldots,x_N]$.

\begin{definition} \label{def:sim}
We say that the square matrix $A$ is \textit{equivalent} to the square matrix $B$ over the ring $R$, denoted by $A\sim B$, if there exist matrices $P, Q$ over $R$ such that $\det(P), \det(Q)$ are units in $R$ and $PAQ=B$. 
\end{definition}
In other words, the matrix $A$ is equivalent to $B$ if and only if we can get from $A$ to $B$ by a series of row and column operations (subtracting a multiple of a row/column from another row/column, or multiplying a row/column by a unit in $R$). It is easy to check that $\sim$ is an equivalence relation.

For all $k\leq N$, let $\gcd(A,k)$ be the greatest common divisor of all the determinants of $k\times k$ submatrices of $A$.

\begin{lemma}\label{lemmasimilar1}
If $A\sim B$, then $\gcd(A,k)=\gcd(B,k)$ for all $k=1,2,\dots,N$, and $\mathrm{rank}(A)=\mathrm{rank}(B).$
\end{lemma}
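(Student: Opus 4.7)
The plan is to reduce the statement to a standard fact about how $k \times k$ minors transform under multiplication by invertible matrices, via the Cauchy--Binet formula. Since $A \sim B$, we may fix matrices $P, Q$ over $R = \mathbb{Z}[x_e : e \in E]$ with $\det(P), \det(Q)$ units in $R$ such that $PAQ = B$. The adjugate identity $M \cdot \operatorname{adj}(M) = \det(M) \cdot I$, valid over any commutative ring, will then guarantee that $P$ and $Q$ are invertible as matrices over $R$, and that $P^{-1}, Q^{-1}$ again have unit determinants.

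The main step is to prove one direction of divisibility, $\gcd(A,k) \mid \gcd(B,k)$. For this I would invoke the Cauchy--Binet formula to expand an arbitrary $k \times k$ minor of $B = PAQ$ as an $R$-linear combination of $k \times k$ minors of $A$, with coefficients that are products of $k \times k$ minors of $P$ and $Q$. This immediately implies $\gcd(A,k)$ divides every $k \times k$ minor of $B$, hence divides $\gcd(B,k)$. Applying the identical argument to the relation $A = P^{-1} B Q^{-1}$ yields the reverse divisibility. Since $R$ is an integral domain, two elements dividing each other are associates, and this is precisely the sense in which gcds are unique, so $\gcd(A,k) = \gcd(B,k)$.

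For the rank claim, I would use that $R$ is a (polynomial) integral domain, so $\operatorname{rank}(A)$ can be taken to be the largest $k$ such that some $k \times k$ minor of $A$ is nonzero, equivalently the largest $k$ with $\gcd(A,k) \neq 0$. Rank invariance then drops out of the gcd invariance already established. I do not expect a genuine obstacle: the whole argument is essentially a single application of Cauchy--Binet, and the only point that requires a moment of care is verifying that unit-determinant square matrices over $R$ really are invertible over $R$ (rather than just over its fraction field), which the adjugate identity settles cleanly.
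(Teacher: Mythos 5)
Your proof is correct. The paper itself does not present an argument here; it simply defers to a cited reference (Theorem 6.5 in [equilemma]), so there is nothing in the text to compare against in detail. Your Cauchy--Binet route is one of the two standard ways to establish this invariance: expanding each $k\times k$ minor of $B=PAQ$ as an $R$-linear combination of $k\times k$ minors of $A$ gives $\gcd(A,k)\mid\gcd(B,k)$, the adjugate identity supplies $P^{-1},Q^{-1}$ over $R$ so the argument reverses, and mutual divisibility in the UFD $\mathbb{Z}[x_e]$ yields equality up to units, which is the only sense in which these gcds are defined. The rank claim follows since over an integral domain the rank equals the largest $k$ with a nonzero $k\times k$ minor, i.e.\ with $\gcd(\cdot,k)\neq 0$. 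The other standard route (and the one suggested by the ``equivalently'' clause in the paper's Definition~\ref{def:sim}) is to track minors directly under each elementary row/column operation, showing at each step that the set of $k\times k$ minors changes only by unit multiples and $R$-linear combinations; this avoids Cauchy--Binet at the cost of a small case analysis. Both are complete; yours is perhaps the cleaner one-shot argument, and you correctly flagged the one point needing care, namely that unit determinant gives invertibility over $R$ itself and not merely over its fraction field.
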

\begin{proof}
It suffices to check the lemma when $B$ can be obtained from $A$ by one single row (or analogously, column) operation, and assume without loss of generality, that it is adding the first row multiplied by $r\in R$ to the second row. For a matrix $M$, denote the matrix obtained from $M$ by choosing rows indexed by $I$ and columns indexed by $J$ as $M_{I,J}$. Let $d=gcd(A,k)$ and we now show that every $k\times k$ submatrix $B_{I,J}$ of $B$ has determinant divisible by $d$. If $2\notin I$, then $B_{I,J}=A_{I,J}$, which has determinant divisible by $d$. If $1,2\in I$, then $B_{I,J}$ can be obtained from $A_{I,J}$ by a single row operation, implying that $\det(B_{I,J})=\det(A_{I,J})$, which is divisible by $d$. The last case is that $2\in A$ and $1\notin A$. Let $I'=(I\setminus\{2\})\cup\{1\}$. It is clear that $\det(B_{I,J})=\det(A_{I,J})+r\det(A_{I',J})$, which is again divisible by $d$. The above arguments showed that $\gcd(A,k)|\gcd(B,k)$ so by symmetry, we have the desired equality. It is a standard fact that the rank of a matrix doesn't change after row and column operations.
\end{proof}
  
\begin{definition}
Let $A$ be an $N\times N$ square matrix over the ring $R$. We say that $A$ has a \textit{diagonal form} over $R$ if there exists a diagonal matrix $D=\mathrm{diag}{(d_1,d_2,\ldots,d_N)}$ in $R$ such that $A\sim D$. In particular, if $d_i | d_{i+1}$ for all $1 \leq i \leq N-1$, then we call $D$ the \textit{Smith normal form} (SNF) of $A$ in $R$.
\end{definition}

It is known that the SNF of a matrix exists and is unique if we are working over a principal ideal domain. But the SNF of a matrix may not exist if we are working over $R$, the ring of integer polynomials. For example, the matrix $\begin{bmatrix}x&0\\0&x+2\end{bmatrix}$ does not have an SNF over $R$.

\begin{lemma}
If the SNF of a matrix $A$ exists, then it is unique up to units. 
\end{lemma}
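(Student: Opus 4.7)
The plan is to exploit the invariance of the $\gcd$'s under $\sim$ (Lemma \ref{lemmasimilar1}) and extract the diagonal entries of any SNF directly from $\gcd(A,k)$. Suppose $D=\mathrm{diag}(d_1,\ldots,d_n)$ and $D'=\mathrm{diag}(d_1',\ldots,d_n')$ are both SNFs of $A$. Since $A\sim D$ and $A\sim D'$, transitivity gives $D\sim D'$, so by Lemma \ref{lemmasimilar1} we have $\gcd(D,k)=\gcd(D',k)$ for every $k$. Thus everything reduces to showing that for a diagonal matrix in SNF form, the quantity $\gcd(D,k)$ determines the entries $d_k$ up to units.

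The computational heart of the argument is: for $D=\mathrm{diag}(d_1,\ldots,d_n)$ with $d_1\mid d_2\mid\cdots\mid d_n$,
\[
\gcd(D,k)=d_1 d_2\cdots d_k \quad\text{(up to units).}
\]
To see this, note that every nonzero $k\times k$ minor of $D$ must select the same $k$ rows and columns (otherwise a zero column appears), producing a product $d_{i_1}d_{i_2}\cdots d_{i_k}$ with $i_1<\cdots<i_k$. Because $d_j\mid d_{i_j}$ whenever $j\le i_j$, each such product is a multiple of $d_1 d_2\cdots d_k$; and $d_1 d_2\cdots d_k$ is itself realized by the choice $(i_1,\ldots,i_k)=(1,\ldots,k)$. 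Hence $\gcd(D,k) = d_1 d_2\cdots d_k$ up to units, and similarly $\gcd(D',k)=d_1'\cdots d_k'$.

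Putting the pieces together: the equalities $d_1\cdots d_k = d_1'\cdots d_k'$ (up to units) for all $k$, together with the fact that $R=\mathbb{Z}[x_e:e\in E]$ is a UFD (so we may cancel), yield $d_k = d_k'$ up to units for each $k$ by induction on $k$, starting from $d_1=d_1'$ via $\gcd(A,1)$. This gives the claimed uniqueness.

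The only subtle point I anticipate is justifying that $\gcd(D,k)$ really equals $d_1\cdots d_k$ on the nose (not merely divides it): one must observe that minors picking distinct row- and column-index sets vanish, so the list of nontrivial minors is precisely the set of products over size-$k$ subsets of $\{d_1,\ldots,d_n\}$. Once this bookkeeping is handled, the chain condition $d_i\mid d_{i+1}$ forces the minimum such product to be the initial segment $d_1\cdots d_k$, and the rest is straightforward cancellation in the UFD $R$.
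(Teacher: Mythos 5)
Your proposal is correct and takes essentially the same approach as the paper: both reduce uniqueness to the invariance of $\gcd(A,k)$ under $\sim$ (Lemma \ref{lemmasimilar1}) and the computation $\gcd(D,k)=d_1\cdots d_k$ forced by the chain condition $d_i\mid d_{i+1}$. One small point you gloss over is the case where some $d_k=0$ (so cancellation in the UFD is unavailable); the paper handles it explicitly by declaring $d_k=0$ whenever $\gcd(A,k)=0$, and your argument is easily patched since the divisibility chain forces all entries after the first zero to vanish as well.
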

\begin{proof}
Let $D$ be one of the SNFs of $A$. Suppose that $A\sim D=\mathrm{diag}{(d_1,\ldots,d_N)}$ where $d_k|d_{k+1}$ for $ k=1,\ldots,N-1$.

It is easy to see that $\mathrm{gcd}{(D,k)}=d_1\cdots d_k$, so $d_1\cdots d_k=\mathrm{gcd}{(A,k)}$ for $ k=1,\ldots,N-1$  by Lemma \ref{lemmasimilar1}.
Given a matrix $A$, the above equations and the condition that $d_k|d_{k+1}$ for $k=1,\ldots,N-1$ are sufficient to solve for $d_k$. Namely, $d_k=0$ if $\mathrm{gcd}{(A,k)}=0$; otherwise $d_k$ equals a unit times $\mathrm{gcd}{(A,k)}/\mathrm{gcd}{(A,k-1)}$. Here, $\mathrm{gcd}{(A,0)}=1$. 
\end{proof}

The next lemma follows  directly from the transitivity of $\sim$ and the uniqueness of the SNF.

\begin{lemma}\label{lemmasimilar2}
If $A\sim B$ and if one of $A$, $B$ has an SNF, then the other also has an SNF and $\mathrm{SNF}(A)=\mathrm{SNF}(B).$
\end{lemma}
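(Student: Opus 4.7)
The plan is to exploit the fact that $\sim$ is already known to be an equivalence relation, together with the preceding uniqueness lemma for the SNF, so that essentially no new computation is required.

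First, I would assume without loss of generality that $A$ has an SNF. By definition this means there is a diagonal matrix $D=\mathrm{diag}(d_1,\ldots,d_n)$ with $d_i\mid d_{i+1}$ for $1\le i\le n-1$ such that $A\sim D$. Since the earlier lemma establishes that $\sim$ is symmetric and transitive, from $A\sim B$ we obtain $B\sim A\sim D$, hence $B\sim D$. But then $D$ itself is a diagonal matrix satisfying the divisibility condition that is equivalent to $B$, so $B$ has an SNF, namely $D$. This handles the existence half of the statement.

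For the equality, I would invoke the preceding uniqueness lemma, which says that whenever an SNF exists it is determined up to units by the entries $\gcd(A,k)/\gcd(A,k-1)$. Alternatively, and even more directly, Lemma \ref{lemmasimilar1} gives $\gcd(A,k)=\gcd(B,k)$ for every $k$, so the formulas for the diagonal entries in the uniqueness argument produce the same $d_k$ (up to units) whether computed from $A$ or from $B$. Therefore $\mathrm{SNF}(A)=\mathrm{SNF}(B)$ in the sense already fixed by the uniqueness lemma.

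There is no real obstacle here: the whole content of the statement is that the property of having an SNF and the SNF itself are $\sim$-invariants, and this drops out immediately once one has (i) the equivalence relation structure of $\sim$ and (ii) the gcd-based uniqueness of SNF. The only thing to be careful about is phrasing ``$\mathrm{SNF}(A)=\mathrm{SNF}(B)$'' consistently with the earlier ``unique up to units,'' which I would flag in one sentence rather than belabor.
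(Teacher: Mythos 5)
Your argument is correct and is exactly the paper's: the paper's one-line proof invokes transitivity of $\sim$ (to transport the SNF from $A$ to $B$) together with uniqueness of the SNF (to conclude equality), which is precisely what you do, just spelled out in more detail.
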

\begin{proof}
By transitivity of the equivalence relation $\sim$, the SNF of matrix $A$ is also an SNF of $B$. The lemma follows from the uniqueness of the SNF, we obtain the desired lemma.
\end{proof}

\begin{definition}
Let $A$ be a matrix over the ring $\mathbb{Z}[x_1,x_2,\ldots,x_N]$.

Define $A_{x_1=f_1(q),x_2=f_2(q),\ldots,x_N=f_N(q)}$ to be the matrix over the ring $\mathbb{Z}[q]$ obtained by replacing each $x_i$ by $f_i(q)$ in $A$.
\end{definition}
  
For example, when $V$ is a Varchenko matrix, the matrix $V_{x=q,\ldots,x_N=q}$ is called the \textit{q-Varchenko matrix}.
  
\begin{lemma}
Let $A,B$ be matrices over the ring $\mathbb{Z}[x_1,x_2,\ldots,x_N]$. If $A\sim B$, then $$A_{x_1=f_1(q),x_2=f_2(q),\ldots,x_N=f_N(q)}\sim B_{x_1=f_1(q),x_2=f_2(q),\ldots,x_N=f_N(q)}.$$
\end{lemma}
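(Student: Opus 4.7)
The plan is to exploit the fact that the entrywise substitution $x_i \mapsto f_i(q)$ is a ring homomorphism $\phi \colon \mathbb{Z}[x_1,\ldots,x_n] \to \mathbb{Z}[q]$, and that matrix multiplication and the determinant are natural with respect to any ring homomorphism. The only nontrivial point to verify is that the $\det(P), \det(Q)$ witnessing $A \sim B$ remain units after substitution; this will be automatic because the units of a polynomial ring over $\mathbb{Z}$ are just $\pm 1$.

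First I would unpack the hypothesis: since $A \sim B$ over $\mathbb{Z}[x_1,\ldots,x_n]$, there exist square matrices $P, Q$ over that ring with $PAQ = B$ and with $\det(P), \det(Q)$ units. Because $\mathbb{Z}[x_1,\ldots,x_n]$ is a polynomial ring over the integral domain $\mathbb{Z}$, its group of units equals the group of units of $\mathbb{Z}$, namely $\{+1,-1\}$. Therefore $\det(P), \det(Q) \in \{+1,-1\}$.

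Next I would apply $\phi$ entrywise to every matrix in sight, writing $M'$ for $M_{x_1=f_1(q),\ldots,x_n=f_n(q)}$. Since $\phi$ is a ring homomorphism, entrywise application is compatible with matrix multiplication, so $P' A' Q' = (PAQ)' = B'$, and compatible with the determinant, giving $\det(P') = \phi(\det(P)) = \phi(\pm 1) = \pm 1$, which is a unit in $\mathbb{Z}[q]$; likewise $\det(Q') = \pm 1$. Thus $P'$ and $Q'$ witness $A' \sim B'$ over $\mathbb{Z}[q]$, completing the argument. There is no real obstacle in this lemma: the content is entirely the observation that the units of $\mathbb{Z}[x_1,\ldots,x_n]$ are already constants and hence survive any evaluation map.
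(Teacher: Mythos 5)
Your proof is correct. The paper actually leaves this lemma unproved, so your argument fills in the omitted justification; the core idea that entrywise substitution is a ring homomorphism and hence commutes with matrix products and determinants is exactly the right one. One small remark: you do not really need the classification of units of $\mathbb{Z}[x_1,\ldots,x_n]$ as $\pm 1$, since any ring homomorphism sends units to units (if $uv=1$ then $\phi(u)\phi(v)=1$), but invoking the explicit description does no harm and makes the point concrete.
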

  
\section{The Main Result}
\label{section 2}
\begin{theorem} \label{main}
Let $\mathcal{A}=\{H_1,\ldots,H_N\}$ be a real, finite, affine hyperplane arrangement in $\mathbb{R}^d$. Assign an indeterminate $x_a$ to each $H_a$, $a\in I=\{1,\ldots,N\}$. Then the Varchenko matrix $V$ associated with $\mathcal{A}$ has a diagonal form over $\mathbb{Z}[x_1,\ldots,x_N]$ if and only if $\mathcal{A}$ is in semigeneral position. In that case, the diagonal entries of the diagonal form of $V$ are exactly the products $$\prod_{a\in B}(1-x^2_a)$$  ranging over all $B\subseteq I$ such that $H_B\in L(\mathcal{A}).$
\end{theorem}

\begin{corollary}
Let $\mathcal{A}$ be any semigeneral hyperplane arrangement in $\mathbb{R}^d$. The $q$-Varchenko matrix $V_q$ of $\mathcal{A}$ has an SNF over the ring $\mathbb{Z}[q]$. The diagonal entries of its SNF are of the form $(1-q^2)^k,\ k=0,1,\ldots,d$, and the multiplicity of $(1-q^2)^k$ equals the number of elements in $L(\mathcal{A})$ with dimension $d-k$.
\end{corollary}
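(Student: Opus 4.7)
The plan is to deduce the corollary directly from Theorem \ref{main} by specializing all indeterminates to a common variable $q$ and then sorting the diagonal entries into a divisibility chain. The heavy lifting is already done by Theorem \ref{main}; what remains is essentially bookkeeping.

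First, I would apply Theorem \ref{main} to obtain a diagonal form of $V$ over $\mathbb{Z}[x_1, \ldots, x_N]$ whose entries are $\prod_{i \in B}(1 - x_i^2)$, indexed by those $B \subseteq E$ with $S_B \in L(\mathcal{A})$. Next, by the specialization lemma at the end of Section 1.4 applied to the substitution $x_i \mapsto q$ for all $i$, the equivalence $\sim$ is preserved, so $V_q$ is equivalent over $\mathbb{Z}[q]$ to the diagonal matrix whose entries are $(1-q^2)^{|B|}$, indexed by the same set of $B$.

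Now I would use the semigeneral hypothesis: for every $S_B \in L(\mathcal{A})$ we have $\dim(S_B) = d - |B|$, so $|B| = d - \dim(S_B)$. Setting $k := d - \dim(S_B)$, the diagonal entry corresponding to $B$ becomes $(1-q^2)^k$, and $k$ takes values in $\{0, 1, \ldots, d\}$ (with $k=0$ coming from $B = \emptyset$, i.e., $S_\emptyset = \mathbb{S}^d$). The multiplicity of $(1-q^2)^k$ among the diagonal entries is exactly the number of $S_B \in L(\mathcal{A})$ with $\dim(S_B) = d-k$, as claimed.

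Finally, to produce the SNF I would reorder the diagonal entries in nondecreasing order of $k$ by a simultaneous permutation of rows and columns, which corresponds to conjugation by a permutation matrix (whose determinant is $\pm 1$, a unit in $\mathbb{Z}[q]$) and is therefore a legitimate $\sim$-equivalence. After reordering, consecutive entries are powers $(1-q^2)^k$ with nondecreasing $k$, so the divisibility condition $d_i \mid d_{i+1}$ holds and the matrix is in SNF. By Lemma \ref{lemmasimilar2} (combined with the uniqueness lemma for SNF up to units) this is the SNF of $V_q$. The only real subtlety to verify is that $\mathbb{Z}[q]$, which is not a PID, nonetheless supports this SNF because the diagonal entries form a totally divisibility-ordered chain of powers of a single polynomial $1-q^2$.
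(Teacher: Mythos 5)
Your proposal is correct and matches the intended derivation: the paper offers no separate proof for this corollary precisely because it is the routine specialization-and-sorting argument you give, invoking Theorem \ref{main}, the substitution lemma at the end of Section 1.4, the semigeneral identity $|B| = d - \dim(S_B)$, and the observation that powers of the single polynomial $1-q^2$ can always be sorted into a divisibility chain even over $\mathbb{Z}[q]$.
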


\begin{corollary}
Let $\mathcal{A}$ be a semigeneral hyperplane arrangement in $\mathbb{R}^d$ and $V$ its Varchenko matrix. Then $$\det(V)=\prod_{a\in I}{(1-x^2_a)}^{m_a},\ where\  m_a=|\{H_B\in L(\mathcal{A}): H_B\subseteq H_a\}|.$$
\end{corollary}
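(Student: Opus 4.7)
The plan is to derive this corollary as a direct bookkeeping consequence of Theorem~\ref{main}. By that theorem, because $\mathcal{A}$ is in semigeneral position, the Varchenko matrix $V$ is equivalent over $\mathbb{Z}[x_e:e\in E]$ to a diagonal matrix $D$ whose entries are exactly $\prod_{i\in B}(1-x_i^2)$, one for each subset $B\subseteq E$ with $S_B\in L(\mathcal{A})$. Equivalence $PVQ=D$ with $\det(P),\det(Q)$ units in $\mathbb{Z}[x_e:e\in E]$ (whose units are just $\pm 1$) yields $\det(V)=\pm\det(D)$.

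Next I compute $\det(D)$ by multiplying out the diagonal entries and regrouping:
$$\det(D)=\prod_{B:\,S_B\in L(\mathcal{A})}\prod_{i\in B}(1-x_i^2)=\prod_{i\in E}(1-x_i^2)^{m_i'},\qquad m_i'=\bigl|\{B\subseteq E:S_B\in L(\mathcal{A}),\ i\in B\}\bigr|.$$
The substantive point is to identify $m_i'$ with $m_i=|\{S_B\in L(\mathcal{A}):S_B\subseteq S_i\}|$. One direction is immediate: if $i\in B$ then $S_B\subseteq S_i$. For the reverse, I will use semigenerality to show that the map $B\mapsto S_B$ is injective on subsets giving a nonempty intersection; indeed, $S_B=S_{B'}$ forces $S_{B\cup B'}=S_B$, and the dimension formula $\dim S_C=d-|C|$ then gives $|B\cup B'|=|B|=|B'|$, whence $B=B'$. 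Consequently $S_B\subseteq S_i$ implies $S_{B\cup\{i\}}=S_B$, so $B\cup\{i\}=B$, i.e.\ $i\in B$. This matches $m_i'$ with $m_i$ and also legitimates the indexing of diagonal entries of $D$ by elements of $L(\mathcal{A})$.

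Finally I must pin down the $\pm$ sign. The cleanest way is to specialize all $x_i=0$: every off-diagonal entry $V_{RR'}$ with $R\neq R'$ is a product over a nonempty separating set, hence vanishes, while diagonal entries are empty products equal to $1$, so $V|_{x=0}$ is the identity and $\det(V)|_{x=0}=1$. The same specialization gives $\det(D)|_{x=0}=1$, forcing the unit to be $+1$ and producing the stated formula.

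I do not anticipate any real obstacle: granted Theorem~\ref{main}, every step is either a rearrangement of a product or an elementary dimension argument. The only point that requires care, rather than genuine difficulty, is the injectivity of $B\mapsto S_B$ under semigenerality, which is what permits the transition from subset-indexed diagonal entries to intersection-element-indexed exponent counts.
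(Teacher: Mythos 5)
Your proposal is correct and is exactly the natural derivation the paper implicitly intends: the corollary is stated without a written proof, as an immediate consequence of Theorem~\ref{main}. You have filled in the bookkeeping carefully — in particular the identification $m_i'=m_i$ via the injectivity of $B\mapsto S_B$ for semigeneral arrangements, and the elimination of the ambient $\pm1$ by specializing $x=0$ — both of which the paper leaves unstated. No gaps.
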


Thus, our proof also serves as an alternative proof for a special case of Theorem \ref{det}.

\section{Construction of the Diagonal Form of the Varchenko Matrices of Semigeneral Arrangements}  
\label{section 3}
  
\ \ \ \ In this section, we prove the sufficient condition of Theorem \ref{main} by explicitly constructing the diagonal form of the Varchenko matrix of a semigeneral arrangement. The construction relies on the existence of a ``good'' indexing of the set of regions (Lemma \ref{lemmaexist}), whose proof is rather technical.
  
\ 
 
Assume as before that we are working in $\mathbb{R}^d$.

\begin{definition}
A set of regions $\mathcal{B}\subset\mathcal{R}(\mathcal{A})$ \textit{encompasses a point} $x\in \mathbb{R}^d$ if the interior of the closure of the union of these regions contains $x$.

A set of regions $\mathcal{B}\subset\mathcal{R}(\mathcal{A})$ \textit{encompasses an element} $M\in L(\mathcal{A})$ if there exists a point $x\in M$ such that $\mathcal{B}$ encompasses $x$.
\end{definition}

In other words, an element $M$ is encompassed by a set of regions $\mathcal{B}$ if a nontrivial part of $M$ with nonzero relative measure is encompassed by some regions in $\mathcal{B}.$

\

Denote by $ E(\mathcal{B})$ the set of elements of the intersection poset that are encompassed by $\mathcal{B}$. Note that $ E(\emptyset)=\emptyset$ and $ E(\{R\})=\{\mathbb{R}^d\}$ for any $R\in\mathcal{R}(\mathcal A).$
  
  In Figure \ref{fig:fig1}, for example, all points on the segment of $H_3$ between region $R_3$ and $R_4$ are encompassed by the set of regions $\{R_1,R_2,R_3,R_4,R_5\}$. So $ E(\{R_1,R_2,R_3,R_4,R_5\})=\{\mathbb{R}^2,H_1,H_2,H_3,H_5\}$ and $ E(\{R_1,R_2,R_3,R_4,R_5, R_6\}) =\{\mathbb{R}^2,H_1,H_2,H_3,H_5,H_3\cap H_5\}$.

\begin{figure}[h]
\centering
\includegraphics[scale=0.4]{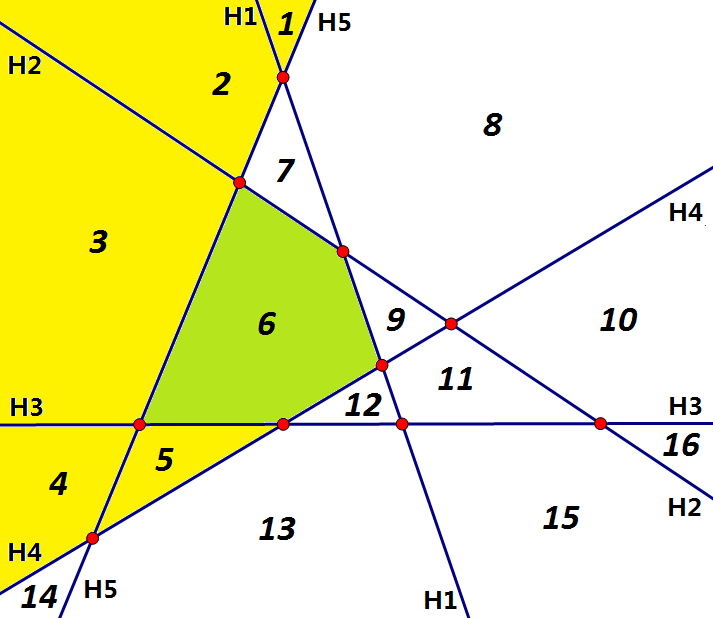}
\caption{General arrangement of 5 lines in $\mathbb{R}^2$; region $R_i$ is labeled as $i$.}
\label{fig:fig1}
\end{figure}

\begin{definition}
Fix a numbering of the regions of $\mathcal{A}$ by $1, 2, \ldots, r(\mathcal{A})$.  We say that a region $R_n$ is the \textit{first to encompass} $M$ for some $M\in L(\mathcal{A})$ if $M\in E(\{R_1,R_2,\ldots,R_n\})$ and $M\notin E(\{R_1,R_2,\ldots,R_{n-1}\})$.
\end{definition}
  
 One can see that $\gcd(V_{mn},x_{a_1}x_{a_2}\cdots x_{a_s})\neq1$ for any $1\leq m<n$, where $R_n$ is the first to encompass  $M=H_{a_1}\cap\cdots\cap H_{a_s}\in L(\mathcal{A}).$
  
For example, in Figure \ref{fig:fig1}, we see that region $R_5$ is the \textit{first to encompass} $H_5$ and region $R_6$ is the \textit{first to encompass} $H_3\cap H_5$.
  
\begin{lemma} \label{lemmaexist}
We can number the regions of $\mathcal{A}$ by $I=\{1,2,\ldots,r(\mathcal{A})\}$ such that $| E(\{R_1,R_2,\ldots,R_n\})|= n$ for all $n\in I$. Under such numbering, let $\mathcal{B}^{(n)}=\{R_1,R_2,\ldots,R_n\}$ be the set of regions with the first $n$ indices and $\mathcal{B}^{(0)}=\emptyset$. Set $E_n= E(\mathcal{B}^{(n)})\backslash E(\mathcal{B}^{(n-1)})$. The following properties hold  for all $n\in I$:
  
(a) The interior of the closure of $\bigcup_{1\leq m\leq n} R_m$  is connected. 
  
(b) For all $M\in L(\mathcal{A})$, the subset $\{x:x\in M,\ \mathcal{B}^{(n)}\ \mathrm{encompasses }\ x\}\subseteq M$ is connected.
  
(c) If $R_n$ is the \textit{first to encompass} $M=H_B
$ where $ B \subseteq I$, then $R_n$ is the first indexed region in the cone formed by all $H_a, a\in B$ that contains $R_n$. 

(d) For all $M\in L(\mathcal{A})$ is cut into connected closed sections $M_1, M_2,\ldots$ by the hyperplanes that intersect $M$. Denote by $R^{(n)}_{M_i}=\{R_m: 1\leq m\leq n,\ M_i\in\overline{R_m}\}$ the set of regions whose boundary contain $M_i$. Then the interior of the closure of $\bigcup_{R_m\in R^{(n)}_{M_i}} R_m$ is connected.
\end{lemma}
  
Lemma \ref{lemmaexist} is saying that there is a way to index the regions of $\mathcal{R}(\mathcal{A})$ one by one in numerical order such that whenever we index a new region, the set of indexed regions encompass exactly one new element in $L(\mathcal{A})$.

The labeling of the regions in Figure \ref{fig:fig1} is such an indexing. The interior of the closure of $\{R_1,R_2,R_3,R_4,R_5\}$ and $\{R_1,R_2,R_3,R_4,R_5,R_6\}$ are connected. If we add $R_6$ to the closure of $\{R_1,R_2,R_3,R_4,R_5\}$, $R_6$ is the region with the smallest index in the cone formed by $H_3$ and $ H_5$. Property (d) is saying that the interior of the closure of all indexed regions around any intersection point, line segment or ray is connected.
  
\begin{proof} [Proof of Lemma \ref{lemmaexist}]
We prove Lemma \ref{lemmaexist} by induction on $n$, the number of regions indexed. 

The base case $n=1$ is trivial since we encompass exactly $\mathbb{R}^d$ after indexing the first region $R_1$. Assign orientations to the pair of half-spaces determined by each hyperplane as follows: suppose that a hyperplane $H$ is first encompassed by $R_n$, then the half-space containing $\mathcal{B}^{(n-1)}$ is labeled as $H^0$ and the half-space containing $R_n$ is labeled as $H^-$.

Suppose Lemma \ref{lemmaexist} holds after indexing the first $n-1$ regions. Let $M=H_{a_1}\cap\cdots\cap H_{a_s}$ be an element of the smallest dimension $d-s$ in $L(\mathcal{A})$ satisfying the condition that there is an unindexed region $R\in \mathcal{R}(\mathcal{A})\backslash\mathcal{B}^{(n-1)}$ such that  $\{R\}\cup  \mathcal{B}^{(n-1)} $ encompasses $M$. Index $R$ by $R_n$.

\begin{claim} 
$ |E_n|=| E(\mathcal{B}^{(n)})|-| E(\mathcal{B}^{(n-1)})|\leq 1$. 
For $n\geq 2$, if $ |E_n|=1,$ then:

(i) $M\in E_n$ is some hyperplane $H\in\mathcal{A}$ for $s=1$;

(ii) $M\in E_n$ has $\dim(M)=d-s$ for $s\geq 2$.

Furthermore, the four properties in Lemma \ref{lemmaexist} remain true.
\end{claim}
In other words, after indexing a new region $R_n$, we encompass at most one new element in $ L(\mathcal{A})$.

\begin{proof}[Proof of Claim]
\

\noindent\textbf{Case (i)}: $s=1$. The closure of any unindexed region is connected to at most one indexed region by some hyperplane since $d-s$ is minimal. Suppose that $R_n=R$ is connected by $H_{a}$ to some $R_m,$ where $ 1\leq m\leq n-1$. Hence we definitely have $H_{a}\in  E(\mathcal{B}^{(n)})$. Note that $R_n$ is connected to only one indexed region, so $|E_n|\leq 1$.  Therefore $ E_n=\{H_{a}\}$ or $E_n=\emptyset$.

Then we want to show that the four properties still hold after adding $R_n$. Property (a) remains true by the induction hypothesis on $n-1$ regions, since $H_a$ connects $R_n$ and $\mathcal{B}^{(n-1)}$. 

For property (c): If $ E_n=\{H_{a}\}$, then $R_n$ has to be the only indexed region in $H^-_{a}$. It follows that $H_{a}\notin  E(\mathcal{B}^{(n-1)})$ and $\{x:x\in H_{a},\ \mathcal{B}^{(n)}\ \mathrm{encompasses }\ x\}=\overline{R_n}\cap\overline{R_m}$ is connected and nonempty. Therefore (c) holds for $n$. 

Note that $M'\in  E(\mathcal{B}^{(n-1)})$ implies $M'\subsetneq H_a$, so $\{x:x\in M',\ \mathcal{B}^{(n)}\ \mathrm{encompasses }\ x\}=\{x:x\in M',\ \mathcal{B}^{(n-1)}$ encompasses $ x\}$ is connected by induction hypothesis (b). If $M'\subset H_a$, then $E_n=\{H_a\}$ and $M'$ is clearly connected. This proves property (b) for $n$.

For part (d), note that $R_n$ is the only indexed region in $H^-_{a}$ and $R_{M_i}$ is changed after we add $R_n$  only if $M_i\in \overline{R_n}$.  The case where $R_n$ is the only indexed region with $M'_i$ as a supporting face  is trivial. Suppose that $M'_i\subseteq (\bigcup^n_{k=1}\overline{R_K})\cap H_a$, in which case $M'_i\subseteq \overline{R_n}\cap\overline{R_m}$. Now $\bigcup_{R\in R^{(n-1)}_{M'_i}} R$ is connected by the induction hypothesis, and $R_n, R_m$ are connected by a nontrivial codimension one subset of $H_{a}$, so $\bigcup_{R\in R^{(n)}_{M'_i}} R$ is also connected.  Therefore property (d) holds for $n$.

\

\noindent\textbf{Case (ii):} $s\geq2$. By induction hypothesis (d) for $n-1$, there exists $2^s-1$ indexed regions in $\mathcal{B}^{(n-1)}$ which, together with $R_n=R$, encompass $M=H_{1}\cap\cdots\cap H_{s}$. Hence $H_{1},\ldots,H_{s}\in E(\mathcal{B}^{(n-1)})$ and $M\in  E(\mathcal{B}^{(n)})$. For all $i=1,2,\ldots,s$, there is a unique indexed region $R_{m_i}$ such that $H_{i}$ connects $R_n$ and $ R_{m_i}$, i.e., $\mathrm{sep}(R_n,R_{m_i})=\{H_i\}$. It is clear that $1\leq m_1\neq\cdots \neq m_s\leq n-1$.

Assume to the contrary that we encompass at least two distinct elements $M$, $M'\in L(\mathcal{A})$ by adding $R_n$. If $M'$ is some hyperplane $H'\in\mathcal{A}$, then  $R_{m_1},\ldots,R_{m_s}$ would be on the same side of $H'$ as $R_n$ since $R_{m_i}$ and $R_n$ is separated by exactly one hyperplane, which is $H_i$. By the induction hypothesis (c), this implies that $H'$ has been encompassed before adding $R_n$, a contradiction to our assumption. 

Hence $M'$ has to be the intersection of some $H'_1,\ldots, H'_t \in E(\mathcal{B}^{(n-1)})$ with $ 1< t\leq d$, all of which are supporting hyperplanes of $R_n$. Denote the $t$ indexed regions connected to $R_n$ by nontrivial codimension-$s$ subsets of $M'$ as $R'_1,\ldots,R'_{2^t-1}$. Let $K'_t=\bigcap_{k=1}^t {H'_k}^{\epsilon'_k}$, where $\epsilon'_k=+$ if $R_n\subset{H'_k}^+$ and $\epsilon'_k=-$ if $R_n\subset{H'_k}^-$. Hence $K'_t$ is a polyhedron containing $R_n$ with $H'_1,\ldots, H'_t$ as facets and $M'$ as its ``tip''. Similarly, let $K_s=\bigcap_{i=1}^s H_{i}^{\epsilon_i}$, where $\epsilon_i=+$ if $R_n\subset H_{i}^+$ and $\epsilon_i=-$ if $R_n\subset H_{i}^-$. 

\begin{figure}[h]
\centering
\includegraphics[scale=0.55]{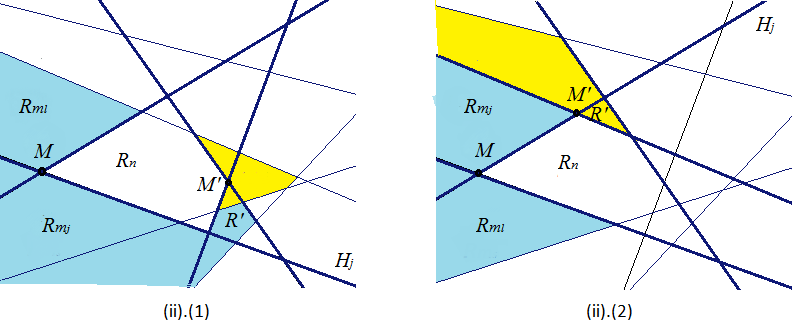}
\caption{Examples of case (ii) in $\mathbb{R}^2$.}
\label{fig:lemma}
\end{figure}

\textbf{Subcase (1).} If $\{H'_k, 1\leq k\leq t\}\cap\{H_{i}, 1\leq i\leq s\}=\emptyset$, then a nontrivial part of $M'$  is inside the interior of $K_s$ and a nontrivial part of $M$ is inside the interior of $K'_t$ by convexity. Pick any $R_{m_j}$ as defined above. Without loss of generality, suppose that $R_{m_j}\subset H_j^+$. Then $M'\subset \overline{R_n}\subset H_j^-$ and  $R'_k\subset H_j^-$ for all $k$. Using the induction hypothesis (a),  $R_{m_j}$ and $\bigcup^{2^t-1}_{k=1}{R'_k}$ have to be connected via a shortest sequence of indexed regions $R_{a_1}=R'_1,\ldots,R_{a_u}=R_{m_j}$ that lie on both sides of $H_{j}$ before $R_n$ is indexed. Let $R'=R_{a_r}$ be one of these regions such that $R'\subset H_{j}^-$ has $H_{j}$ as a supporting hyperplane and $R_{a_r+1}\subset H_j^+$. Pick any $l\neq j$ such that $R_{m_l}$ and $R'$ are on different sides of exactly one $H_i$, with $1\leq i\neq j\leq s$.  In the inductive case of $n-1$, $R_n$ is an unindexed region between the disjoint regions $R'$ and $R_{m_l}$. (See Figure \ref{fig:lemma},(1) for example.) Hence $R'$ and $R_{m_l}$ are connected via a shortest sequence of indexed regions $S=\{R_{a_{r+1}},\ldots, R_{a_u}=R_{m_j}, R_{b_1},\ldots,R_{b_v}\}$ in $H_j^+$ that contains $R_{m_j}$, where each $R_{b_k}$ is an indexed region with supporting hyperplanes $H_1,\ldots,H_s$. 

Note that the set of points in $H_j$ that are encompassed by $S$ has two $(d-1)$-dimensional components whose closures are disjoint. Consider the arrangement $\mathcal{A}'$ in $H_j$ cut out by all other hyperplanes in $\mathcal{A}$. We say that a region in $\mathcal{A}'$ is \textit{colored} if it is the common facet of two indexed region of $\mathcal{A}$, one in $H_j^+$ and the other in $H_j^-$. The union of this sequence of indexed regions intersects with $H_j$ at two totally disjoint colored regions. Property (b) says that the two regions have to be connected by a sequence of other colored regions of $\mathcal{A}'$ . Since the facet of $R_n$ that is contained in $H_j$ is not yet colored, there exists some $\tilde{M}\subset H_j$, $\tilde{M}\in L(\mathcal{A}')$ with $\dim(\tilde{M})\geq 1$ that does not satisfy the inductive hypothesis $(b)$ for $n-1$, which is a contradiction.

\textbf{Subcase (2).} If there exists a hyperplane $H_{j}\in\{H'_k, 1\leq k\leq t\}\cap\{H_{i}, 1\leq i\leq s\}$, then we can find a region $R'_k$  such that $R_{m_j}$, $R'=R'_k$, and $R_n$ are on the same side of $H_{j}$. Note that in the inductive case $n-1$, $H_{j}$ is a supporting hyperplane of $R'$ and $R_n$ is an unindexed region between $R_{m_l}$ and $R'$ (see Figure \ref{fig:lemma}.(ii) for instance), which points us back to the second half of the proof of case (1). 

Therefore, we conclude that $E_n\subseteq\{M\}.$

\

Then we want to show that the four properties remain true after adding $R_n$. For property (c), suppose that $R_n$ is the first to encompass $M\in E_n$. If $R_n$ is not the first indexed region in $K_s$, then we can find  $R_j, 1\leq j\leq n-1$  that is an indexed region in $K_s$  separated from $M$ by the fewest number of hyperplanes before we index $R_n$. If $R_j$ is strictly inside the interior of $K_s$, then we can apply the argument in part (1) of (ii) above by replacing $R'$ with $R_j$. Otherwise, use the argument in (ii).(2) by replacing $R'_k$ with $R_j$. Both lead to a contradiction to the induction hypothesis (b) of the case $n-1$. Therefore $R_n$ has to be the first indexed region in $K_s$, so property (c) holds for $n$. Properties (a), (b) and (d) follow immediately.
This completes the inductive step.
\end{proof}
Note that after adding all regions in $\mathcal{R(\mathcal{A})}$, we encompass all elements of $ L(\mathcal{A})$. Since $\mathcal{A}$ is semigeneral, we have $|L(\mathcal{A})|=|\mathcal{R}(\mathcal{A})|$. Therefore we need to encompass exactly one new element after indexing a new region, meaning that $|E_n|=1$ and $| E(\mathcal{B}^{(n)})|= n$ for all $n\in I$ as desired.
\end{proof}
From now on, we fix an indexing of $\mathcal{R(\mathcal{A})}$ that satisfies all the properties in Lemma \ref{lemmaexist}.
In order to compute the diagonal form of the Varchenko matrix of $\mathcal{A}$ , we need  the following two constructions:

\begin{definition}
Define $\varphi:\mathbb{Z}[x_1,\ldots,x_N]\rightarrow \mathbb{Z}[x_1,\ldots,x_N]$ to be the function satisfying the following properties:

(a) $\varphi(p+q)=\varphi(p)+\varphi(q)$ for all $p,q\in\mathbb{Z}[x_1,\ldots,x_N]$.
  
(b) $\varphi(p\cdot q)=\varphi(p)\varphi(q)$ for all monomials $p,q\in\mathbb{Z}[x_1,\ldots,x_N]$ with $\gcd(p,q)=1$.
  
(c) $\varphi(x^k_a)=x^2_a$ if $k\geq2$ and $\varphi(x_a^k)=x_a^k$ if $k=0,1$ for all $a=1,\ldots,N,$
   
(d) $\varphi(0)=0.$
\end{definition}   
   
It is easy to check that $\varphi$ is well-defined and unique. In fact, $\varphi(p)$ is obtained from $p$ by replacing all exponents $e\geq 3$ by $2$.

\begin{proposition} \label{prop34}
For all $i\in \{1,2, \ldots ,N\}$ and $p\in\mathbb{Z}[x_1,\ldots,x_{a-1}, x_{a+1},\ldots, x_N]$, 
 
$a)\ \varphi\big(x^2_a(1-x^2_a\cdot p)\big)= x^2_a\cdot \varphi(1-p);$
   
$b)\ \varphi\big((1-x^2_a)(1-x^2_a\cdot p)\big)=1-x^2_a$.  
\end{proposition}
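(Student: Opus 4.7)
The plan is to expand both products, apply additivity of $\varphi$ to reduce to a sum over monomials, and then exploit the hypothesis that $p$ does not involve $x_i$ so that the multiplicativity clause (b) of $\varphi$ applies to each monomial $x_i^k m_\alpha$ occurring in the expansion. First I would write $p = \sum_\alpha c_\alpha m_\alpha$ where each $m_\alpha$ is a monomial supported in $\{x_1,\ldots,x_N\}\setminus\{x_i\}$. Then for any $k\geq 0$, since $\gcd(x_i^k, m_\alpha)=1$, additivity together with property (b) give the key identity
\[
\varphi(x_i^k\,p)=\sum_\alpha c_\alpha\,\varphi(x_i^k)\,\varphi(m_\alpha)=\varphi(x_i^k)\,\varphi(p).
\]

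For part (a), I would expand $x_i^2(1-x_i^2 p)=x_i^2 - x_i^4 p$ and apply $\varphi$ termwise. The identity above yields $\varphi(x_i^4 p)=\varphi(x_i^4)\,\varphi(p)=x_i^2\varphi(p)$, so
\[
\varphi\bigl(x_i^2(1-x_i^2 p)\bigr)=x_i^2 - x_i^2\,\varphi(p)=x_i^2\bigl(1-\varphi(p)\bigr)=x_i^2\,\varphi(1-p),
\]
using $\varphi(1)=1$ from property (c) with $k=0$.

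For part (b), I would expand $(1-x_i^2)(1-x_i^2 p)=1-x_i^2-x_i^2 p + x_i^4 p$ and apply $\varphi$ termwise. Using the same key identity, the final two terms contribute $-\varphi(x_i^2)\varphi(p)+\varphi(x_i^4)\varphi(p)=-x_i^2\varphi(p)+x_i^2\varphi(p)=0$, so everything collapses to $\varphi(1)-\varphi(x_i^2)=1-x_i^2$.

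There is no genuine obstacle; the only subtlety to watch is that property (b) of $\varphi$ asserts multiplicativity only for coprime monomials, so one must distribute $\varphi$ over the monomial expansion of $p$ before invoking it. The hypothesis that $p\in\mathbb{Z}[x_1,\ldots,\widehat{x_i},\ldots,x_N]$ is exactly what guarantees the required coprimality, and also what makes the cancellation in (b) work: if $p$ contained a factor of $x_i$, the exponent on $x_i$ in $x_i^2 p$ and $x_i^4 p$ could both be $\geq 3$ but land at different truncated values, breaking the cancellation.
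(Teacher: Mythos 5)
Your proof is correct and fills in exactly the details the paper leaves implicit --- the published proof is the single line that the identities ``follow directly from the definition of $\varphi$,'' and your route (decompose $p$ into monomials $m_\alpha$ avoiding $x_i$, apply $\mathbb{Z}$-linearity, then use coprime-monomial multiplicativity to pull out $\varphi(x_i^k)$) is what that line is tacitly invoking.

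One small inaccuracy in your closing remark is worth flagging, though it does not touch the argument itself. You claim that if $p$ contained a factor of $x_i$, the exponents on $x_i$ in $x_i^2 p$ and $x_i^4 p$ could ``both be $\geq 3$ but land at different truncated values,'' breaking the cancellation in part (b). In fact $\varphi$ collapses every exponent $\geq 2$ to exactly $2$, so for any monomial of $p$ with $x_i$-exponent $e\geq 0$, the shifted exponents $2+e$ and $4+e$ are both $\geq 2$ and both truncate to $2$; hence $\varphi(x_i^2 p)=\varphi(x_i^4 p)$ holds monomial by monomial and the cancellation in (b) actually survives without the hypothesis. It is part (a) for which the hypothesis is genuinely needed: taking $p=x_i$ gives $\varphi\bigl(x_i^2-x_i^5\bigr)=x_i^2-x_i^2=0$ on the left, while $x_i^2\varphi(1-x_i)=x_i^2-x_i^3\neq 0$ on the right. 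Since your proof never relies on that remark, its conclusion stands; only the stated rationale for the hypothesis should be corrected.
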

\begin{proof}
The above identities follow directly from the definition of $\varphi$. 
\end{proof}

\begin{definition} \label{def:operate}
Let $P=[P_{i,j}]$ be an $N\times N$ symmetric matrix with entries $P_{i,j}\in\mathbb{Z}[x_1,\ldots,x_r]$. If $P_{k,k}$ is a factor of $P_{k,n}$, denoted as $P_{k,k} \big| P_{k,n}$, for all $n=1,\ldots,N$, we can define a matrix operation $T^{(k)}$ by 
  $$(T^{(k)}P)_{m,n}= \left\{ \begin{array}{rcl}
P_{k,k}\ \ \ \ \ \ \ \ \ \ \ \ \ \ \ \   & \mathrm{if\ } m=k,n=k& \\
P_{m,n}-\frac{P_{m,k}\cdot P_{n,k}}{P_{k,k}} & \mbox{otherwise} & \ 
\end{array}\right..$$

\end{definition}
  
In other words, we can apply $T^{(k)}$ to $P$ only when $P_{k,k}\big| P_{k,i}$ for all $i$. For each $i\neq k$, we subtract row $i$ by $\frac{P_{k,i}}{P_{k,k}}$ times row $k$ to get $P'$. After operating on all rows, we then subtract column $j$ of $P'$ by $\frac{P_{k,j}}{P_{k,k}}$ times column $k$ of $P'$ to get $T^{(k)}P$. It is easy to check that $T^{(k)}$ is a well-defined operation. The resulting matrix is also symmetric and the entries $P_{i,k}=P_{k,i}=0$ for all $i\neq k$.
  
\
  
Set $V^{(0)}=V(\mathcal{A})$. For all $k=1,\ldots,r(\mathcal{A})$, let $V^{(k)}$ be the matrix obtained by applying $T^{(1)},...,T^{(k)}$ in order as such to $V^{(0)}$. Denote the $(m,n)$-entry of $V^{(k)}$ by $V^{(k)}_{m,n}$. Suppose that  $E_k=\{M = H_{a_1}\cap H_{a_2}\cap\cdots\cap H_{a_s}\}$. Set $A_k=\{a_1,a_2,\ldots,a_s\}$.\label{def:postoperation}

\begin{lemma} \label{claim}
$a)\ \displaystyle{V^{(k)}_{k,k}=\prod_{a\in A_k}(1-x^2_{a})}; $

$b)\ V^{(k)}_{m,n}=0$ for all $m\neq n \leq k;$ $V^{(k)}_{m,m}=V^{(m)}_{m,m}$ for all $m\leq k$;

$c)\ \displaystyle{V^{(k)}_{m,n}=V_{m,n}\cdot \varphi\big(\prod_{i=1}^{k}(1-l^2_{i}(m,n))\big)}$ if at least one of $m, n$ is greater than $k$. 
\end{lemma}

\begin{proof} 
We will justify the Lemma by induction on $k$.
The statements hold for the base case $k=0$ by the definition of $V^{(0)}$.
  
\
  
Suppose that the statements hold for $k-1$.
  
It follows from Lemma \ref{lemmaexist}.c that $R_k$ has the smallest index in the polyhedron $K_{a_{1}a_{2}\cdots a_{s}}.$ For all $ 1\leq i\leq s,$ there exists a unique $r_i$ such that $ 1\leq r_i\leq k-1$ and $H_{a_i}$ connects $R_{r_i}$ and $R_k$, i.e., $\mathrm{sep}(R_{r_i},R_k)=\{H_{a_i}\}$. Furthermore, $r_i\neq r_j$ for all $i\neq j$. 
  
\ 
 
\begin{remark} $ V^{(k-1)}_{m,k}=V_{m,k}\cdot V^{(k-1)}_{k,k}$ or $0$.\label{remark}
\begin{proof}   
If $R_m$ is not in  $K_{a_{1}a_{2}\cdots a_{s}}$, then $R_m$ and $R_k$ are on different sides of at least one of the hyperplanes $H_{a_1},\ldots,H_{a_s}$, say $H_{a_1}$. Thus $H_{a_1} \notin \mathrm{sep}(R_{r_1},R_m)$, i.e., $\mathrm{sep}(R_{r_1},R_m)\cap \mathrm{sep}(R_{r_1},R_k)=\mathrm{sep}(R_{r_1},R_m)\cap \{H_{a_1}\}=\emptyset.$ Therefore $l_{r_1}(m,k)=1.$ By the induction hypothesis, we have $$V^{(k-1)}_{m,k}=V_{m,k} \cdot \varphi\big((1-l^2_{r_1}(m,k))\cdot\prod_{j=1,j\neq r_1}^{k-1}(1-l^2_{j}(m,k))\big)=0.$$ 

If $R_m$ is contained in $K_{a_{1}a_{2}\cdots a_{s}}$, then $R_m$ and $R_k$ are on the same side of $H_{a_i}$ for all $1\leq i \leq s$. Therefore, for any indexed region $R_j$, $j=1,2,\ldots,k-1$, at least one of $H_{a_1},\ldots, H_{a_s}$ separates $R_j$ and $R_m\cup R_k$, say $H_{a_l}$. Since $H_{a_l}\in \mathrm{sep}(R_j,R_m)$, it follows that $x_{a_l} \mid l_{j}(m,k).$ Note that $H_{a_i} \in \mathrm{sep}(R_{r_i},R_m)$ for all $1 \leq i \leq s,$ so $\mathrm{sep}(R_{r_i},R_m) \cap \mathrm{sep}(R_{r_i},R_k) = \mathrm{sep}(R_{r_i},R_m) \cap \{H_{a_i}\}=\{H_{a_i}\}$. Therefore $ l_{r_i}(m,k)=x_{a_i}.$ Applying the results of Proposition \ref{prop34}.b, we get
\begin{align*}
V^{(k-1)}_{m,k}=&V_{m,k} \cdot \varphi\big((1-l^2_{r_1}(m,k))\cdots(1-l^2_{r_s}(m,k))\cdot\prod_{\stackrel{j=1,}{j \neq r_1,\ldots ,r_s}}^{k-1}(1-l^2_{j}(m,k))\big)\\
=&V_{m,k} \cdot (1-x^2_{a_1})(1-x^2_{a_2})\cdots(1-x^2_{a_s}),
\end{align*}
since $R_j\nsubseteq K_{a_{1}a_{2}\cdots a_{s}} $ is separated from $R_m$ by at least one of $H_{a_1},\ldots, H_{a_s}$.

On the other hand, by the induction hypothesis 
\begin{align*}
&V^{(k-1)}_{k,k}=V_{k,k} \cdot \varphi\big(\prod_{j=1}^{k-1}(1-l^2_{j}(k,k)\big)\\
=&\varphi\big((1-l^2_{r_1}(k,k))\cdots(1-l^2_{r_s}(k,k))\cdot\prod_{\stackrel{j=1,}{j \neq r_1,\ldots ,r_s}}^{k-1}(1-l^2_{j}(k,k))\big)\\
=&(1-x^2_{a_1})(1-x^2_{a_2})\cdots(1-x^2_{a_s}). \numberthis
\end{align*}
Hence we conclude that $$V^{(k-1)}_{m,k}=V_{m,k} \cdot V^{(k-1)}_{k,k}.$$  
\end {proof}
\end{remark}
  
Since $V^{(k-1)}_{k,k} \mid V^{(k-1)}_{m,k}$ for all $m=1,2,\ldots,r(\mathcal{A})$,  we can apply the matrix operation $T^{(k)}$ to $V^{(k-1)}$.
By definition of $T^{(k)}$, if $m\neq n \leq k$, then $V^{(k)}_{m,n}=0$. Otherwise,
\begin{align*}
V^{(k)}_{m,n}=V^{(k-1)}_{m,n}-\frac{V^{(k-1)}_{m,k}\cdot V^{(k-1)}_{n,k}}{V^{(k-1)}_{k,k}}.\numberthis
\end{align*}
 
It follows immediately that $\displaystyle{V^{(k)}_{k,k}=V^{(k-1)}_{k,k}=(1-x^2_{a_1})(1-x^2_{a_2})\cdots(1-x^2_{a_s})}$. Therefore, claim (a) holds for $k$.
In addition, we can deduce from Remark \ref{remark} that if at least one of $R_m, R_n$ is not contained in $K_{a_{1}a_{2}\cdots a_{s}}$, then
\begin{align*}
V^{(k)}_{m,n}=V^{(k-1)}_{m,n}-\frac{V^{(k-1)}_{m,k}\cdot V^{(k-1)}_{n,k}}{V^{(k-1)}_{k,k}}=V^{(k-1)}_{m,n}-0=&{V_{m,n}\cdot \varphi\big(\prod_{i=1}^{k-1}(1-l^2_{i}(m,n)\big)}\\
=&{V_{m,n}\cdot \varphi\big(\prod_{i=1}^{k}(1-l^2_{i}(m,n)\big)}.
\end{align*}
Note that if $m\neq n \leq k$, then neither of $R_m,R_n$ is contained in $K_{a_{1}a_{2}\cdots a_{s}}$ and $ V^{(k)}_{m,n}=V^{(k-1)}_{m,n}=0$  by the induction hypothesis. If $m=n<k$, then $V^{(k)}_{m,m}=V^{(k-1)}_{m,m}=\cdots=V^{(m)}_{m,m}$. Hence (b) also holds for $k$.
   
\   

In order to prove (c), it suffices to show that if $m,n$ are both contained in $K_{a_{1}a_{2}\cdots a_{s}}$, then  
$${V_{m,n}\cdot \varphi\big(\prod_{i=1}^{k-1}(1-l^2_{i}(m,n)\big)}-V_{m,n}\cdot \varphi\big(\prod_{i=1}^{k}(1-l^2_{i}(m,n)\big)= V^{(k-1)}_{m,n}-V^{(k)}_{m,n}$$
\begin{align} 
=\frac{V^{(k-1)}_{m,k}\cdot V^{(k-1)}_{n,k}}{V^{(k-1)}_{k,k}}
=V_{m,k}\cdot V_{n,k}\cdot V^{(k-1)}_{k,k}
=V_{m,n}\cdot l^2_{k}(m,n) \cdot V^{(k-1)}_{k,k}.
\end{align}

Using linearity of $\varphi$, we can combine the two terms on the left hand side of the above equation:
\begin{equation}
LHS=V_{m,n}\cdot \varphi\big(l^2_{k}(m,n)\cdot\prod_{i=1}^{k-1}(1-l^2_{i}(m,n)\big).
\end{equation}
So we only need to show that 
\begin{equation}
\varphi\big(l^2_{k}(m,n)\cdot\prod_{i=1}^{k-1}(1-l^2_{i}(m,n))\big)=l^2_{k}(m,n) \cdot V^{(k-1)}_{k,k}.
\end{equation}
It follows from Proposition \ref{prop34}.(a) that $$l^2_{k}(m,n) \mid \varphi\big(l^2_{k}(m,n)\cdot\prod_{i=1}^{k-1}(1-l^2_{i}(m,n))\big);\ l^2_{k}(m,n) \mid\varphi\big(l^2_{k}(m,n) \cdot l^2_{i}(m,n)\big).$$ Hence we can pull out $l^2_{k}(m,n)$ on the left hand side:
\begin{align}
\varphi\big(l^2_{k}(m,n)\cdot\prod_{i=1}^{k-1}(1-l^2_{i}(m,n))\big)=l^2_{k}(m,n) \cdot \varphi\big(\prod_{i=1}^{k-1}(1-\tilde{l}^2_{i}(m,n))\big),\\
\textrm{where } \tilde{l}^2_{i}(m,n)=\frac{\varphi\big(l^2_{k}(m,n)\cdot l^2_{i}(m,n)\big)}{l^2_{k}(m,n)}\textrm{ for all }i =1,2, \ldots, k-1.
\end{align}

Note that $\tilde{l}_{i}(m,n)$ is exactly the \textit{distance} between $R_i$ and $R_m\cup R_n$ in $\tilde{\mathcal{A}}_{m,n}=\mathcal{A} \backslash \big(\mathrm{sep}(R_k,R_m)\cap \mathrm{sep}(R_k,R_n)\big)=\mathcal{A} \backslash \mathrm{sep}(R_k,R_m\cup R_n).$
Therefore it suffices to show that in $\tilde{\mathcal{A}}_{m,n},$
\begin{equation}
\varphi\big(\prod_{i=1}^{k-1}(1-\tilde{l}^2_{i}(m,n))\big)=V^{(k-1)}_{k,k}.
\end{equation}
   
Since $R_m, R_n, R_k$ are still contained in the cone formed by $H_{a_1},\ldots,H_{a_s}$ in $\tilde{\mathcal{A}}$, it is clear that

\begin{align*}
 \varphi\big(\prod_{j=1}^{k-1}(1-\tilde{l}^2_{j}(m,n))\big)
 =&\varphi\big((1-\tilde{l}^2_{r_1}(m,n))\cdots (1-\tilde{l}^2_{r_s}(m,n))\cdot\prod_{\stackrel{j=1,}{j\neq r_1,\ldots ,r_s}}^{k-1}(1-\tilde{l}^2_{j}(m,n))\big)\\
 =&(1-x^2_{r_1})\cdots(1-x^2_{r_s})=V^{(k-1)}_{k,k}.
\end{align*}
 Hence we conclude that (c) holds for $k$, and this completes the induction. 
\end{proof}

\ 

An immediate corollary of Lemma \ref{claim} is that when $k=r(\mathcal{A}),\ V^{r(\mathcal{A})}_{m,n}=0$ for all $m\neq n$. Thus we have reduced $V$ to a diagonal matrix. We know from Lemma \ref{lemmaexist}.c that by adding region $k$ we encompass exactly one new element $M\in L(\mathcal{A})$, so each entry $\displaystyle{V^{(k)}_{k,k}=\prod_{a\in A_k}(1-x^2_{a})}$  appears exactly once on the diagonal of $V^{r(\mathcal{A})}$. Hence we've proven the sufficient condition of Theorem \ref{main}.
 
 \

\section{Nonexistence of the Diagonal Form of the Varchenko Matrices of Arrangements Not in Semigeneral Positions}
\label{section 4}
In this section, we will prove the necessary condition of Theorem \ref{main}.
  
\begin{lemma} \label{lem:sub}
Suppose that $\mathcal{A}$ is a finite hyperplane arrangement in $\mathbb{R}^d$ and $H\notin\mathcal{A}$ is a $(d-1)$-dimensional hyperplane. If $V(\mathcal{A}\cup\{H\})$ has a diagonal form over $\mathbb{Z}[x_1,\ldots,x_n]$, then $V(\mathcal{A})$ also has a diagonal form over $\mathbb{Z}[x_1,\ldots,x_n].$
\end{lemma}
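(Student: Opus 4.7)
The plan is to specialize the equivalence $V(\mathcal{A}\cup\{S\})\sim D'$ at $x_S=1$, use elementary operations to identify the specialized matrix (up to an augmentation by zeros) with $V(\mathcal{A})$, and then strip off the trailing zero block. Write $V'=V(\mathcal{A}\cup\{S\})$, $V=V(\mathcal{A})$, $r=|\mathcal{R}(\mathcal{A})|$, and let $0_a$ denote an $a\times a$ zero block. Since ring homomorphisms preserve matrix equivalence (the same argument as in the substitution lemma at the end of Section 1), $V'|_{x_S=1}\sim D'|_{x_S=1}$ over $\mathbb{Z}[x_e:e\in E]$. Every region $R\in\mathcal{R}(\mathcal{A})$ is either uncut by $S$ (contributing one region to $\mathcal{R}(\mathcal{A}\cup\{S\})$) or cut by $S$ into a pair $R^+,R^-$; let $a$ be the number of cut regions, so $|\mathcal{R}(\mathcal{A}\cup\{S\})|=r+a$.

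At $x_S=1$ every entry of $V'$ depends only on the underlying region in $\mathcal{A}$ and equals the corresponding entry of $V$. Hence for each cut $R$ the rows (and columns) indexed by $R^+$ and $R^-$ in $V'|_{x_S=1}$ coincide; subtracting column $R^+$ from column $R^-$ and row $R^+$ from row $R^-$ for every cut $R$ zeros out the $R^-$-rows and columns and leaves $V$ in the remaining block. Thus $V'|_{x_S=1}\sim V\oplus 0_a$, and so $V\oplus 0_a\sim D'|_{x_S=1}$. Setting all $x_e=0$ reduces $V$ to the identity matrix, so $\det V$ has constant term $1$ and is in particular nonzero; hence $V\oplus 0_a$ has rank $r$ over the fraction field, forcing $D'|_{x_S=1}$ to have exactly $a$ zero and $r$ nonzero diagonal entries. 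Permuting, $D'|_{x_S=1}\sim D_1\oplus 0_a$ where $D_1$ is an $r\times r$ diagonal matrix with nonzero diagonal, and the problem reduces to showing that $V\oplus 0_a\sim D_1\oplus 0_a$ implies $V\sim D_1$.

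The hard part is this last block-stripping step. Given invertible $(r+a)\times(r+a)$ matrices $P,Q$ with $P(V\oplus 0_a)Q=D_1\oplus 0_a$, I plan to partition $P$ and $Q$ into $2\times 2$ block matrices with $P_{11},Q_{11}$ of size $r\times r$. Since $\det V$ and $\det D_1$ are nonzero, both $V\oplus 0_a$ and $D_1\oplus 0_a$ have kernel equal to $\{0\}\oplus (\mathbb{Z}[x_e:e\in E])^a$, and the relation $(V\oplus 0_a)Q=P^{-1}(D_1\oplus 0_a)$ forces $Q$ to preserve this kernel, giving $Q_{12}=0$. Since $\det Q$ is a unit in the domain $\mathbb{Z}[x_e:e\in E]$ and $Q$ is now block lower triangular, both $Q_{11}$ and $Q_{22}$ have unit determinant. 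A direct block expansion of $P(V\oplus 0_a)Q$ yields $P_{11}VQ_{11}=D_1$ and $P_{21}VQ_{11}=0$; using $Q_{11}$ invertible and $V$ injective (again from $\det V\neq 0$), the second equation forces $P_{21}=0$, which in turn forces $P_{11}$ to have unit determinant. The first equation then exhibits $V\sim D_1$, completing the argument.
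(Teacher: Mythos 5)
Your argument is correct and follows essentially the same route as the paper's: specialize at $x_S=1$, collapse duplicated rows/columns to obtain $V\oplus 0_a$, and then strip the zero block via a block-triangularity argument on the transforming matrices $P,Q$. The only cosmetic differences are that you phrase the key identity as $P(V\oplus 0_a)Q=D_1\oplus 0_a$ rather than $P(V\oplus 0_a)=DQ$, you derive $Q_{12}=0$ via a kernel-preservation observation where the paper reads it off directly from $D'Q_2=0$, and you get $\det V\neq 0$ from the constant term rather than invoking Varchenko's determinant formula.
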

  
\begin{proof}
Let $x_1,\ldots,x_n$ be the indeterminates for hyperplanes in $\mathcal{A}$ and $x_{n+1}$ for $H$. Set $V=V(\mathcal{A})$ and $V^{(0)}=V(\mathcal{A}\cup\{H\})$.
  
Let $V^{(1)}$ be the matrix obtained by setting $x_{n+1}=1$ in $V^{(0)}$. Observe that the $i^{th}$ row (column) and the $j^{th}$ row (column) of $V^{(1)}$ is the same for all $i\neq j$ if $V^{(0)}_{i,j}=x_{n+1}$, i.e., region $i$ and $j$ are separated only by $S$. Apply row and column operations to eliminate repeated rows (columns), and we will get $V^{(1)}\sim V\oplus \textbf{0}_{k}$, where $\textbf{0}_k$ is the all zero matrix of dimension $k\times k$ and $k=r(\mathcal{A}\cup\{H\})-r(\mathcal{A})$.
  
If $V^{(0)}$ has a diagonal form over $\mathbb{Z}[x_1,\ldots,x_{n+1}]$, then we can assign an integer value to $x_{n+1}$ and hence $V^{(1)}$ and $V\oplus\textbf{0}_k$ have a diagonal form over $\mathbb{Z}[x_1,\ldots,x_n]$.
  
Let $D$ be the diagonal form of $V\oplus\textbf{0}_k$. According to Theorem \ref{det}, $\det(V)\neq0$; by Lemma \ref{lemmasimilar1}, $\mathrm{rank}(D)=\mathrm{rank}(V^{(1)})$, which is equal to the dimension $r(\mathcal{A})$ of $V$. Therefore, the number of zeros on $D$'s diagonal is $k$.
  
Note that there exist matrices $P,Q$ of dimension $r(\mathcal{A}\cup\{H\})$ and unit determinant such that $P(V\oplus \textbf{0}_k)=DQ.$ We can also write the matrices in the following way, where $D'$ is the diagonal matrix obtained from eliminating the all zero rows and columns in $D$:
  \[
\left[
\begin{array}{ccc|c}
& & & \\
& P_1 & & P_3 \\
& & & \\ \hline
& P_2 & & P_4 \\
\end{array}
\right]
\left[
\begin{array}{ccc|c}
& & & \\
& V & & 0 \\
& & & \\ \hline
& 0 & &0 \\
\end{array}
\right]
=
\left[
\begin{array}{ccc|c}
& & & \\
& D' & & 0 \\
& & & \\ \hline
& 0 & &0 \\
\end{array}
\right]
\left[
\begin{array}{ccc|c}
& & & \\
& Q_1 & & Q_2 \\
& & & \\ \hline
& Q_3 & & Q_4 \\
\end{array}
\right];
\]

  \[
\left[
\begin{array}{ccc|c}
& & & \\
& P_1V & & 0 \\
& & & \\ \hline
& P_2V & & 0 \\
\end{array}
\right]
=
\left[
\begin{array}{ccc|c}
& & & \\
& D'Q_1 & & D'Q_2 \\
& & & \\ \hline
& 0 & &0 \\
\end{array}
\right].
\]

It is easy to check that $P_2V=0$, $D'Q_2$=0, $P_1V=D'Q_1$.

Since $\det(V)\neq0$ and $\det(D')\neq0$, $P_2$ and $Q_2$ have only 0 entries. Therefore, $1=\det(P)=\det(P_1)\det(P_4)$; $1=\det(Q)=\det(Q_1)\det(Q_4)$. The only units in $\mathbb{Z}[x_1,\ldots,x_n]$ are $1$ and $-1$, so we can assume that $\det(P_1)=\det(Q_1)=1$. Thus, $D'$ is a diagonal form of $V$. 
\end{proof}

Now we've arrived at the main theorem of this section, which is also the necessary condition of Theorem \ref{main}.

\begin{theorem}
Let $\mathcal{A}$ be a hyperplane arrangement in $\mathbb{R}^d$ that is not semigeneral. Then $V(\mathcal{A})$ does not have a diagonal form over $\mathbb{Z}[x_1,\ldots,x_n]$.
\end{theorem}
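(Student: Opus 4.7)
The plan is to reduce to a minimal non-semigeneral sub-arrangement via Lemma \ref{lem:sub}, and then rule out a diagonal form by a rank argument under coordinate specializations. First, since $\mathcal{A}$ is not semigeneral, pick $B \subseteq E$ of smallest cardinality such that $S_B \neq \emptyset$ and $\dim S_B > d - |B|$; set $\mathcal{A}_0 = \{S_i : i \in B\}$. Lemma \ref{lem:sub} tells us that having a diagonal form is preserved under removing a pseudosphere, so iterating the removal of the pseudospheres in $E \setminus B$ reduces the theorem to showing that $V(\mathcal{A}_0)$ has no diagonal form. By the minimality of $|B|$, every proper sub-arrangement $\mathcal{A}_0 \setminus T$ (for $\emptyset \neq T \subseteq B$) is semigeneral; in particular $\dim S_B = d - |B| + 1$, and $S_B = S_{B'}$ in $L(\mathcal{A}_0)$ for every $B'$ of size $|B|-1$.

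Suppose for contradiction that $V(\mathcal{A}_0) \sim D = \mathrm{diag}(d_1, \ldots, d_n)$ with $n = r(\mathcal{A}_0)$. For each $T \subseteq B$, the substitution $x_i \mapsto 1$ for $i \in T$ is a ring homomorphism that preserves matrix equivalence; by Lemma \ref{lemmasimilar1} together with the row-collapse computation used in the proof of Lemma \ref{lem:sub}, $\#\{i : d_i|_{x_T = 1} \neq 0\} = \mathrm{rank}\, V(\mathcal{A}_0)|_{x_T = 1} = r(\mathcal{A}_0 \setminus T)$. For each $\emptyset \neq B' \subseteq B$ with $S_{B'} \in L(\mathcal{A}_0)$, define $A_{B'} = \{i : (1 - x_{B'}) \mid d_i\}$ where $x_{B'} = \prod_{j \in B'} x_j$; by Theorem \ref{det}, these are precisely the $(1-\cdot)$-type irreducible factors appearing in $\det V(\mathcal{A}_0)$. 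Because $(1 - x_j : j \in T)$ is a prime ideal of $\mathbb{Z}[x_i : i \in B]$ and a polynomial lies in it iff some irreducible factor of it does, we obtain $\{i : d_i|_{x_T=1} = 0\} = \bigcup_{B' \subseteq T} A_{B'}$, the union restricted to those $B'$ with $S_{B'} \in L(\mathcal{A}_0)$.

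The contradiction now follows from cardinalities. In the prototype case $|B| = d+1$ and $\dim S_B = 0$ (for $d = 2$, the three concurrent pseudocircles), the only lattice elements in $L(\mathcal{A}_0)$ are $\mathbb{S}^d$, the $S_i$ for $i \in B$, and the bad element $S_B$ itself; thus the only factors of $\det V(\mathcal{A}_0)$ of the form $(1 - x_{B'})$ are $(1 - x_j)$ for singletons $j \in B$ and $(1 - x_B)$ for $S_B$. For $T = \{j, k\}$ with $j \neq k$ we get $|A_{\{j\}} \cup A_{\{k\}}| = n - r(\mathcal{A}_0 \setminus \{j, k\})$; combined with $|A_{\{j\}}| = n - r(\mathcal{A}_0 \setminus \{j\})$, a short arithmetic check forces $A_{\{j\}} \cap A_{\{k\}} = \emptyset$ for all $j \neq k$. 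Hence the $A_{\{j\}}$ are pairwise disjoint and their total size equals $\sum_j (n - r(\mathcal{A}_0 \setminus \{j\})) = n$, forcing $A_{\{1\}} \sqcup \cdots \sqcup A_{\{|B|\}} = \{1, \ldots, n\}$. But at $T = B$ the equation $|\bigcup_{B' \subseteq B} A_{B'}| = n - 1$ is inconsistent with $|A_{\{1\}} \cup \cdots \cup A_{\{|B|\}}| = n$, since adjoining $A_B$ can only enlarge the union.

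The main obstacle is extending this counting to arbitrary minimal bad configurations, in particular when $L(\mathcal{A}_0)$ contains intermediate elements $S_{B'}$ of sizes $2 \leq |B'| \leq |B| - 2$; their associated sets $A_{B'}$ can absorb some of the overlap among the singletons and must be handled by a more refined inclusion-exclusion using the exact multiplicities supplied by Theorem \ref{det}. I expect to handle this either by induction on $d$ or $|B|$ (restricting to a sub-pseudosphere transverse to $S_B$ produces a smaller minimal bad arrangement in a lower-dimensional ambient space, whence Lemma \ref{lem:sub} reduces to the flat prototype case above), or by a direct Möbius-type inversion on the Boolean lattice $2^B$ controlled by the determinantal formula of Theorem \ref{det}.
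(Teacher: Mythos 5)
Your reduction is exactly the paper's: use Lemma~\ref{lem:sub} to strip off pseudospheres until only a minimal non-semigeneral sub-arrangement $\mathcal{A}_0$ remains, and then (implicitly) pass to a transversal slice so that $|B|=d+1$ and $S_B$ is a single point. From there your method genuinely diverges from the paper's. You track, for each irreducible factor $(1-x_{B_M})$ of $\det V(\mathcal{A}_0)$, the index set $A_{B_M}$ of diagonal entries containing it, and extract linear constraints on $|\bigcup_{B_M\subseteq T}A_{B_M}|$ from the rank of $V|_{x_T=1}$ via Lemma~\ref{lemmasimilar1}. This is a clean idea, the identification of which factors vanish under $x_T\mapsto 1$ is correct (only $(1-x_{B_M})$ with $B_M\subseteq T$, never $(1+x_{B_M})$), and your arithmetic in the $d=2$ concurrent-circles case is right: singletons force $A_{\{j\}}$ pairwise disjoint with total size $n$, and then $T=B$ demands the union drop to $n-1$, a contradiction. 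For comparison, the paper's proof instead specializes to integers ($x_i=3$, $x_j=0$) and to a single variable $q$, uses Smith normal form uniqueness to pin down how $(1\pm x_i)$ and $(1\pm x_1\cdots x_{d+1})$ must pair in $D$, runs an inductive count of what it calls $k$-entries, and then splits on the parity of $d$: a rank argument at $x_i=1$ when $d$ is even, and a $\gcd$-at-$x_i=3$ argument when $d$ is odd.

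The gap you flag is real and is not a formality. For $d\geq 3$ the lattice $L(\mathcal{A}_0)$ does contain the intermediate elements $S_{B'}$ for $2\leq |B'|\leq d-1$, so the sets $A_{B'}$ for those $B'$ enter every $|\bigcup_{B_M\subseteq T}A_{B_M}|$ constraint with $|T|\geq 2$, and the singletons are no longer forced to be disjoint; the pretty collision at $T=B$ is not immediate. Your proposed escape -- induct by slicing with a sub-pseudosphere transverse to $S_B$ -- does not help here, because after the Lemma~\ref{lem:sub} reduction you are already at the concurrent point configuration $|B|=d+1$, $\dim S_B=0$, and slicing transverse to a point gives nothing; the ``prototype'' with no intermediate lattice elements exists only for $d=2$. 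So either the Möbius/inclusion-exclusion must actually be carried out for all $d$ with the multiplicities from Theorem~\ref{det}, or a different mechanism is needed. One concrete warning if you pursue the counting route: the system $|\bigcup_{B_M\subseteq T}A_{B_M}|=n-r(\mathcal{A}_0\setminus T)$ determines the sizes of certain unions but not the sets themselves, and the paper's proof shows the constraint that eventually bites is not purely a rank/counting one when $d$ is odd -- there the rank at $x_i=1$ is consistent, and one has to bring in $\gcd(\cdot,2)$ at an integer specialization to see the contradiction. Your framework as stated only uses ranks, so you would at minimum need to supplement it with a divisibility invariant to cover that case.
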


\begin{proof}
Using Lemma \ref{lem:sub}, we can delete as many hyperplanes in $\mathcal{A}$ as possible so that the resulting arrangement, denoted again as $\mathcal{A}$, satisfies the nonsemigeneral property and the \textit{minimum condition}, i.e., if we delete any hyperplane, the remaining arrangement will be semigeneral.

Note that there must exist $H_1,\ldots,H_p\in\mathcal{A}$ with nonempty intersection such that $\dim({H_1\cap\cdots\cap H_p})\neq d-p$ . Hence $\dim({H_1\cap\cdots\cap H_p})\geq d-p+1$. If $\dim({H_1\cap\cdots\cap H_p})\geq d-p+2$, then $\dim({H_2\cap\cdots\cap H_p})\geq d-(p-1)$, contradicting the minimum condition of $\mathcal{A}$. Therefore,  $\dim({H_1\cap\cdots\cap H_p})=d-p+1$. Also by the minimum condition, $\mathcal{A}=\{H_1,\ldots,H_p\}$.

Without loss of generality, we can assume that $p=d+1$ (by projecting all hyperplanes to a smaller subspace). Thus we only need to consider the case of $\mathcal{A}=\{H_1,\ldots,H_{d+1}\}$ in $\mathbb{R}^d$, where the intersection $H_1\cap\cdots\cap H_{d+1}$ is a single point and any hyperplane arrangement formed by a subset of $\mathcal{A}$ with cardinality $d$ is semigeneral and nonempty.

Now we can deduce the structure of the intersection poset $L(\mathcal{A})$: $L(\mathcal{A})$ consists of the intersection of any $k$ hyperplanes in $\mathcal{A}$ for all $k=0,1,\ldots,d-1$ and the point $H_1\cap\cdots \cap H_{d+1}$, which is also the intersection of any $d$ hyperplanes in $\mathcal{A}$. It follows that for all $H\in\mathcal{A}$, $\mathcal{A}^H$ is the hyperplane arrangement of $d$ hyperplanes of dimension $d-1$ intersecting at one point in $\mathbb{R}^{d-1}$, so $r(\mathcal{A}^S)=2^d-2, \ r(\mathcal{A})=2^{d+1}-2.$

Let $x_1,\ldots,x_{d+1}$ be the indeterminates of the hyperplanes in $\mathcal{A}$. By Theorem \ref{det}, $\det V(\mathcal{A})=(1-x_1^2)^{2^d-2}(1-x_2^2)^{2^d-2}\cdots(1-x_{d+1}^2)^{2^d-2}(1-x_1^2\cdots x_{d+1}^2)$.

Set $H_1\cap\cdots\cap H_{d+1}$ as the origin $(0,0,\ldots,0)$. Pick any hyperplane $H\in\mathcal{A}.$ Then $S$ separates the space into two half spaces $H_1, H_2$. Since $\mathcal{A}$ is symmetric about the origin, exactly half of $\mathcal{R}(\mathcal{A})$ lies in $H_1$, i.e., $r(H_1)=\frac{1}{2}r(\mathcal{A})=2^{d}-1$. We also know that $r(\mathcal{A}^S)=2^d-2=(2^d-1)-1$. Thus, for all but one region $R$ in $H_1$, the intersection of its closure and $H$ has dimension $d-1$. The intersection of the closure of $R$ with $H$ is the point $H_1\cap\cdots\cap H_{d+1}$.

Here comes an important observation: If we restrict to the Varchenko matrix of regions in $H_1$, denoted by $\mathcal{R}(H_1)$, we obtain the Varchenko matrix of the hyperplane arrangement of $d$ hyperplanes in $\mathbb{R}^{d-1}$ in general position. This matrix is equivalent to the Varchenko matrix for $\mathcal{A}^H\cup R$. Intuitively, we can view $R$ as the inner region of the point $H_1\cap\cdots\cap H_{d+1}$.

\

We'll prove the theorem by contradiction. Suppose that $D$ is a diagonal form of $V(\mathcal{A})$, then $V\sim D$.

First, we want to show that in $D$'s diagonal entries, $1-x_i$ and $1+x_i$ must appear in the form of $1-x_i^2$ for all $i=1,\ldots,d+1$.

For all $i=1,\ldots,d+1$, consider $V_{x_i=3,x_j=0\ \forall j\neq i}$. It can be decomposed into blocks of $\begin{bmatrix}1&3\\3&1\end{bmatrix}$ and identity matrices, so its SNF has diagonal entries 1 and 8 with multiplicities. Now $D_{x_i=3,x_j=0\ \forall j\neq i}$ has the same SNF by Lemma \ref{lemmasimilar2}. Note that $1-x_i=-2$, $1+x_i=4$, and their products are all powers of 2, so $D_{x_i=3,x_j=0\ \forall j\neq i}$ is already in its SNF. We have equal numbers of $4$ and $-2$, so they must pair up in the form of $1-x_i^2$ or we won't have only 1 and 8 on the diagonal. In addition, $1-x_i^2$ can appear at most once in each diagonal entry of $D$.

We can ignore the terms $1-x_1\cdots x_{d+1}$ and $1+x_1\cdots x_{d+1}$ for the moment since we will assign at least one of $x_i$ to 0 in the following steps.

If we set $x_{d+1}=0$, we will get two blocks of matrices corresponding to a general position using the earlier observation. If we set all other indeterminates equal to the indeterminate $q$, the diagonal form of $V_{x_{d+1}=0}$ has diagonal entries $(1-x_{i_1}^2)\cdots(1-x_{i_k}^2)$ (twice) for all $k=0,\ldots,d-1$, $1\leq i_1<\cdots<i_k\leq d$. Thus, the SNF of $V_{x_i=0,x_j=q,\forall j\neq i}$ (over $\mathbb{Z}[q]$) has diagonal entries $(1-q^2)^k$ ($2{d\choose k}$ times) for all $k=0,1,\ldots,d-1$.

We call a diagonal entry of $D$ a \textit{$k-$entry} if after setting $x_1=\ldots=x_{d+1}=q$, it becomes $(1-q^2)^k$. All diagonal entries of $D_{x_i=0,x_j=q,\forall j\neq i}$ have the form $(1-q^2)^k$ for some $k$ so it is already in its SNF.  Since the SNF is unique, $D_{x_i=0,x_j=q,\forall j\neq i}$ has diagonal entries $(1-q^2)^k$ ($2{d\choose k}$ times) for all $k=0,1,\ldots,d-1$.

We then compute the exact number of $k-$entries in $D$. The number of $k-$entries in $D$ is 0 if $k\geq d$; otherwise, there exists some $i\in\{1,\ldots,d+1\}$ such that $D_{x_i=0,x_j=q,\forall j\neq i}$ has a diagonal entry $(1-q^2)^d$, which leads to a contradiction. 

\begin{claim}The number of $(d-2k-1)$-entries in $D$ is $2{d+1\choose 2k+2}$ and the number of $(d-2k-2)$-entries in $D$ is 0. 
\begin{proof}
We'll prove the claim by induction on $k$. It is true when $k=0$.

If the claim holds for $k$,  i.e., the number of $(d-2k)$-entries in $D$ is 0.
Assume that the number of $(d-2k-1)$-entries in $D$ is $m$ and $1-x_i^2$ appears in $a_i$ of these entries. 

Set $x_i=0$ and all other indeterminates equal to $q$. Since there are  exactly $2{d\choose d-2k-1}$ number of $(1-q^2)^{d-2k-1}$'s and no $(d-2k)$-entries in $D$,  
$m-a_i=2{d\choose d-2k-1}.$
Therefore $a_i$ is a constant with respect to $i$.

By a simple double counting of the total number of $(1-\Box^2)$'s in those entries, where $\Box=x_1,\ldots,x_{d+1}$, we have
$$\sum_{i=1}^{d+1}a_i=\Bigg(m-2{d\choose d-2k-1}\Bigg)\cdot(d+1)=m\cdot(d-2k-1).$$
It is easy to check that $m=2{d+1\choose 2k+2}$ and $a_i=2{d+1\choose 2k+2}-2{d\choose d-2k-1}=2{d\choose 2k+2}$ for all $i=1,\ldots,d+1$.

Now if we assign 0 to $x_i$ and $q$ to all other indeterminates, we already have $(1-q^2)^{d-2k-2}$ occuring $a_i=2{d\choose 2k+2}$ times so we do not need more. Therefore, all the $(d-2k-2)$-entries in $D$ must contain $1-x_i^2$. It is true for all $i=1,\ldots,d+1$ so we have a contradiction unless the number of $(d-2k-2)$-entries is 0.

This completes the induction and proves the claim. 
\end{proof}
\end{claim}

If $d$ is even, the number of 0-entries (1's) is 0 in $D$. In other words, if we assign 1 to all indeterminates, $D$ becomes the all zero matrix with rank 0, while the rank of $V$ becomes 1, which is a contradiction  by Lemma \ref{lemmasimilar1}. 

\

If $d$ is odd, we have to take into account the terms $1-x_1\cdots x_{d+1}$ and $1+x_1\cdots x_{d+1}$. As before, we will first show that they must pair up.

Let $x_1=3$, $x_i=1$ for all $i\geq2$ in $V$. Then in $V$, row $i$ is identical to row $j$ if region $i$ and $j$ are on the same side of $H_1$. Eliminating repeated rows with row and column operations , we get $\begin{bmatrix}1&3\\3&1\end{bmatrix}\oplus \textbf{0}$ where $\textbf{0}$ is the all zero matrix, which has SNF $\{1,8,0$ (multiple times)$\}$. Note that when $d$ is odd, there are no $1-$entries. Thus 1 and 8 must come from a combination of $1-x_1\cdots x_{d+1}$ and $1+x_1\cdots x_{d+1}$. Hence they must appear in the form of $1-x_1^2\cdots x_{d+1}^2$.

Furthermore, $1-x_1^2\cdots x_{d+1}^2$ must appear alone in a $0$-entry. Otherwise, since there are no $1-$entries in $D$, after assigning $x_i=1$ for $i\geq2$ we will end up with a matrix with only two 1's on the diagonal and 0 everywhere else. Since $d$ is odd, the number of $0-$entries is 2. One of them is $1-x_1^2\cdots x_{d+1}^2$ and the other one can only be a true 1. 

Consider $V_{x_1=\cdots=x_{d+1}=3}$ and $D_{x_1=\cdots=x_{d+1}=3}.$ Since $V_{x_1=\cdots=x_{d+1}=3}$ has a submatrix $\begin{bmatrix}1&3\\3&1\end{bmatrix}$, we deduce that
$\gcd(V_{x_1=\cdots=x_{d+1}=3},2)\leq 8$. On the diagonal of $D_{x_1=\cdots=x_{d+1}=3}$, there is a 1, a $1-3^{2d+2}$, and all other entries are multiples of $(1-3^2)^2=64$ since there is no 1-entry. 

Note that $1-3^{2d+2}=(1-3^2)(1+3^2+3^4+\cdots+3^{2d})$. Since $d$ is odd, so $1+3^2+3^4+\cdots+3^{2d}$ is even and $16\ |\ (1-3^{2d+2})$. 
Therefore, $16\ |\ \gcd(D_{x_1=\cdots=x_{d+1}=3},2)$, which leads to a contradiction by Lemma \ref{lemmasimilar1}.

Hence we conclude that $V(\mathcal{A})$ does not have a diagonal form. 
\end{proof}

\section{Acknowledgment}
This research was conducted during the 2015 summer UROP (Undergraduate Research Opportunities Program) at MIT . The authors are very grateful to Prof. Richard P. Stanley for suggesting the topic and supervising the entire project.

\

This is a pre-print of an article published in Journal of Algebraic Combinatorics. The final authenticated version is available online at: \url{https://doi.org/10.1007/s10801-018-0813-7}.


\begin{thebibliography}{50}


\bibitem{caimu} Cai, Wu Xing, Mu, Lili. "On the Smith normal form of the q-Varchenko matrix of a real hyperplane arrangement." Preprint.

\bibitem{denham1997smith}Denham, Graham, and Phil Hanlon. "On the Smith normal form of the Varchenko bilinear form of a hyperplane arrangement." \textit{Pacific J. Math} 181.3 (1997): 123-146.



\bibitem{stanley2004introduction}Stanley, Richard P. "An introduction to hyperplane arrangements." \textit{Geometric combinatorics} 13 (2004): 389-496.


		\bibitem{varchenko1993bilinear}Varchenko, Alexandre. "Bilinear form of real configuration of hyperplanes." \textit{Advances in Mathematics} 97.1 (1993): 110-144.
        
\end{thebibliography}
\end{document}